\def\pushright#1{{
   \parfillskip=0pt            
   \widowpenalty=10000         
   \displaywidowpenalty=10000  
   \finalhyphendemerits=0      
  %
   \leavevmode                 
   \unskip                     
   \nobreak                    
   \hfil                       
   \penalty50                  
   \hskip.2em                  
   \null                       
   \hfill                      
   {#1}                        
  %
   \par}}                      
\def\qEd{{\lower1 pt\hbox{\vbox{\hrule\hbox{\vrule\kern4 pt
    \vbox{\kern4 pt\hbox{\hss}\kern4 pt}\kern4 pt\vrule}\hrule}}}}
\def\qed{\pushright{\qEd}
    \penalty-700 \par\addvspace{\medskipamount}}
\newcommand{\vsubseteq}{\rotatebox[origin=c]{-90}{$\subseteq$}}
\newcommand{\inob}{\in}
\newcommand{\catu}{\mathbf{Cat}}
\newcommand{\spancat}{\mathbf{Span}}
\newcommand{\vcat}{\catv\mathbf{CAT}}
\newcommand{\lightcatu}{\mathbf{CAT}}
\newcommand{\nlightcat}{n\mathbf{CAT}}
\newcommand{\bigmoncatu}{\mathbf{MONCAT}}
\newcommand{\twocat}{2\underline{\mathbf{CAT}}}
\newcommand{\modcat}{\underline{\mathbf{CAT}}}
\newcommand{\modcatku}{\underline{\mathbf{CAT}}_{k}}
\newcommand{\iterfam}[1]{\Psi{#1}}
\newcommand{\iterunfam}[1]{\Psi_{\univv}{#1}}
\newcommand{\iterfamo}{\Psi}
\newcommand{\class}{\mathbf{Class}}
\newcommand{\ncat}{n\mathbf{Cat}}
\newcommand{\nmodcat}{n\underline{\mathbf{CAT}}}
\newcommand{\equivr}[1]{\mathrm{Eq({#1})}}
\newcommand{\hfinite}{\mathbf{HF}}
\newcommand{\setu}{\set_{\univv}}
\newcommand{\classu}{\class_{\univv}}
\newcommand{\aset}[1]{^{\circ}{#1}}
\newcommand{\subo}[1]{i_{{#1}}}
\newcommand{\subs}[1]{\mathrm{Sub}({#1})}
\newcommand{\subsstar}[1]{\mathrm{Sub}^{*}({#1})}
\newcommand{\univv}{\mathfrak{U}}
\newcommand{\univvs}{\mathfrak{\tilde{U}}}
\newcommand{\univp}{\mathfrak{U}'}
\newcommand{\yons}{\mathcal{Y}}
\newcommand{\yonset}{\beta}
\newcommand{\funccat}[2]{[{#1},{#2}]}
\newcommand{\catc}{\mathcal{C}}
\newcommand{\catd}{\mathcal{D}}
\newcommand{\catw}{\mathcal{W}}
\newcommand{\catv}{\mathcal{V}}
\newcommand{\catm}{\mathcal{M}}
\newcommand{\cate}{\mathcal{E}}
\newcommand{\setbr}[1]{\{{#1}\}}
\newcommand{\beunixt}{\hspace{0.5em}}
\newenvironment{spaceout}[1]{\begin{displaymath}\setlength{\extrarowheight}{3pt}\begin{array}{#1}}{\end{array}\setlength{\extrarowheight}{0pt}\end{displaymath} \noindent}
\newcommand{\betwixt }{\hspace{1em}}
\newcommand{\ob}{\ensuremath{\mathsf{ob}\ }}
\newcommand{\op}{{\mathsf{op}}}
\newcommand{\nats}{\ensuremath{\mathbb{N}}}
\newcommand{\set}{\mathbf{Set}}
\newcommand{\pset}{\mathcal{P}}
\newcommand{\eqdef}{\stackrel{\mbox{\rm {\tiny def}}}{=}}
\newcommand{\iffdef}{\stackrel{\mbox{\rm {\tiny def}}}{\Leftrightarrow}}
\newtheorem{definition}{Definition}
\newtheorem{proposition}{Proposition}
\newtheorem{propprop}[proposition]{Proposed Theorem}
\newcommand{\smallcati}{\mathbb{I}}
\newcommand{\smallpresh}{\mathbf{SP}(\catc)}
\newtheorem{theorem}[proposition]{Proposition}
\newtheorem{corollary}[proposition]{Corollary}
\title{Formulating categorical concepts with classes}
\author{Paul Blain Levy, University of Birmingham}
\begin{document}
\maketitle

\begin{abstract}
  We examine the use of classes to formulate several categorical notions.  This leads to two proposals: an explicit structure for working with subobjects, and a hierarchy of $k$-classes.  We apply the latter to both ordinary and higher categories.
\end{abstract}

\bibliographystyle{alpha}
\section{Introduction}

The notion of ``class'' pervades category theory but its role is not always apparent.  This article brings together a number of common concepts that it affects, and proposes some ways of formulating them.  We begin in Section~\ref{sect:prelim} by reviewing the basic notions of universe, class and category, and taking note of encoding issues for quotients and tuples.  In Section~\ref{sect:subob} we look at the theory of subobjects, leading to a notion of ``well-powering'', an  explicit structure for well-powered categories.  In Section~\ref{sect:hier} we propose a hierarchy of $k$-classes that is useful for formulating the Yoneda lemma and several other constructions, including higher category theory.  To make the general framework more user-friendly, Section~\ref{sect:sbd} proposes a convention---inspired by \cite{Murfet:sizecat}---for indicating size restrictions.  We sum up in Section~\ref{sect:conclusion}.


Many foundational systems have been proposed to deal with size issues in category theory, see e.g.~\cite{EnayatGorbowMcKenzie:forays,Feferman:setcat,Muller:setsclasses,Shulman:setcat}; an extensive survey is given in~\cite{Shulman:setcat}.  But we shall use the conventional framework of ZFC with universes.


\paragraph{Related work.} Size issues have been widely discussed, e.g.\ in the textbooks~\cite{AdamekHerrlichStrecker:concretecat,MacLane:categoriesbook}.  Dowd~\cite{Dowd:hightypecat} considered categorical applications of a hierarchy of classes in an extended version of ZFC.   Categories of classes have been studied by the ``Algebraic set theory'' school, e.g.~\cite{JoyalMoerdijk:algset,AwodeyButzSimpsonStreicher:relclasses}, and functors on them by~\cite{AczelMendler:finalcoalg,AdamekMiliusVelebil:coalgclass}.

\section{Preliminaries} \label{sect:prelim}
\subsection{Universes} \label{sect:univ}

In many accounts of category theory, a category is taken to have a \emph{class}  of objects, and there is a category of all sets. However (as stated above) we are working in ZFC, so we cannot speak of classes.  Instead we define a category $\catc$ to consist of a \emph{set} $\ob \catc$ and a family
 of sets $(\catc(x,y))_{x,y \inob \catc}$ together with composition and identities. By Russell's Theorem, there is no category of all sets.  That is a problem, and the notion of a (Grothendieck) universe provides a way of dealing with it.
\begin{definition}
Let $\univv$ be a set.  A \emph{universe} is a set $\univv$ with the following properties.
   \begin{itemize}
    \item Any set in $\univv$ is a subset of $\univv$.
    \item $\emptyset \in \univv$.
    \item If $x,y \in \univv$ then $\{x,y\} \in \univv$.
    \item If $I$ is a set in $\univv$ and $(A_i)_{i \in I}$ is a family of
      sets in $\univv$ then $\bigcup_{i \in I} A_i \in \univv$.
    \item If $A$ is a set in $\univv$ then $\pset A\in \univv$.
    \end{itemize}
\end{definition}
The least universe is the set $\hfinite$ of hereditarily finite sets, which does not contain $\nats$.  All other universes do contain $\nats$, but it cannot be proved in ZFC that such universes exist (assuming ZFC consistent). 

\begin{definition}
 Let $\univv$ be a universe.
 \begin{itemize}
 \item A \emph{$\univv$-small set} is a set in $\univv$.
\item A \emph{$\univv$-class} is a subset of $\univv$.
 \end{itemize}
We write
\begin{itemize}
\item $\setu$ for the category of $\univv$-small sets and functions
\item $\classu$ for the category of $\univv$-classes and functions.
\end{itemize}
\end{definition}
Thus $\setu \subsetneqq \classu$.  In particular, $\univv$ itself is a $\univv$-class but not a $\univv$-set.

The construction $\univv \mapsto \setu$ is designed to serve as a kind of substitute for the category of all sets.  But the extent to which it succeeds depends on what we assume about the existence of universes.  To see why, consider the following statements:
\begin{proposition}\label{prop:invcat}
  In any category $\catc$, a morphism $f \colon A \to B$ has at most one inverse.
\end{proposition}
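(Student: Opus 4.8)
The plan is the standard uniqueness-of-inverse argument, relying only on the category axioms: associativity of composition together with the unit laws for identities. No universe or class machinery from the preliminaries is needed, since the statement is about an arbitrary category $\catc$.

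First I would unfold the definition. An inverse of $f \colon A \to B$ is a morphism $g \colon B \to A$ satisfying both $g \circ f = \id_A$ and $f \circ g = \id_B$. So I would begin by supposing that $g$ and $h$ are each inverses of $f$, giving me the four equations $g \circ f = \id_A$, $f \circ g = \id_B$, $h \circ f = \id_A$, and $f \circ h = \id_B$. The goal is then to show $g = h$.

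The key step is the single chain of equalities
\[
g = g \circ \id_B = g \circ (f \circ h) = (g \circ f) \circ h = \id_A \circ h = h,
\]
which invokes, in order, the right unit law, the fact that $h$ is a right inverse of $f$, associativity of composition, the fact that $g$ is a left inverse of $f$, and finally the left unit law.

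There is no genuine obstacle here: once the two-sided nature of the definition is in play, the conclusion is forced. The only point requiring any care is to use one of the two morphisms on its left-inverse side and the other on its right-inverse side, so that both halves of the inverse conditions actually get invoked; this is exactly what makes associativity do the work of collapsing $g$ and $h$ to a common value.
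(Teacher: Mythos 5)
Your proof is correct: it is the standard uniqueness-of-inverse argument via $g = g \circ (f \circ h) = (g \circ f) \circ h = h$, using associativity and both unit laws exactly as needed. The paper states this proposition without proof (it serves only to illustrate universe issues in Section~\ref{sect:univ}), and your argument is precisely the canonical one it implicitly takes for granted.
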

\begin{proposition}\label{prop:invsetu}
  Let $\univv$ be a universe.  In $\setu$, a morphism  $f \colon A \to B$ has at most one inverse.
\end{proposition}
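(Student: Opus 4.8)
The plan is to notice that there is essentially nothing new to prove: $\setu$ is, by its very definition, a category, its objects being the $\univv$-small sets and its morphisms the functions between them, with the usual composition and identities. Proposition~\ref{prop:invcat} is a statement about an \emph{arbitrary} category $\catc$, so I would simply instantiate it with $\catc = \setu$. The conclusion of Proposition~\ref{prop:invsetu} is then immediate.

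If a self-contained argument is wanted, I would unfold the general proof in this concrete setting. Suppose $g, g' \colon B \to A$ are both inverses of $f \colon A \to B$ in $\setu$, that is, functions satisfying $g \circ f = \id_A$, $f \circ g = \id_B$, and likewise for $g'$. Then associativity of composition gives $g = g \circ \id_B = g \circ (f \circ g') = (g \circ f) \circ g' = \id_A \circ g' = g'$, which uses only the category axioms and nothing specific about sets. This is the same short computation that proves Proposition~\ref{prop:invcat}.

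The real content here seems to me foundational rather than mathematical, and I expect that to be the author's actual point in juxtaposing the two propositions. Proposition~\ref{prop:invcat} is a single theorem of ZFC that quantifies over all categories and needs no universe at all, whereas Proposition~\ref{prop:invsetu} only makes sense once $\setu$ has been formed, which is why it carries the hypothesis ``Let $\univv$ be a universe''. Its proof is a trivial corollary, but it is parasitic on the ambient category-theoretic result. The subtle part is therefore recognising that, although we cannot prove in ZFC that any universe containing $\nats$ exists, the \emph{universally quantified} claim ``for every universe $\univv$, every morphism of $\setu$ has at most one inverse'' is nonetheless a theorem of ZFC: each instance follows uniformly from Proposition~\ref{prop:invcat}, irrespective of how many universes happen to exist. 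I would flag this as the point the discussion is building towards, rather than treating the one-line deduction as an obstacle.
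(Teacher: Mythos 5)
Your proposal is correct and matches the paper exactly: the paper's entire justification for Proposition~\ref{prop:invsetu} is that it ``is an instance of Proposition~\ref{prop:invcat}'', which is precisely your instantiation $\catc = \setu$. Your unfolded computation and the foundational remarks are sound additions, but the core argument is the same one-line specialisation the paper intends.
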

\begin{proposition}\label{prop:invset}
  For any sets $A$ and $B$, a function $f \colon A \to B$ has at most one inverse.
\end{proposition}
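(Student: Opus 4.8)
The plan is to give the direct computation that inverses are unique, carried out \emph{externally} in ZFC rather than inside any ambient category. The tempting shortcut---instantiating Proposition~\ref{prop:invcat} with the ``category of all sets''---is unavailable, since by Russell's Theorem there is no such category; this is precisely the foundational wrinkle that these three propositions are meant to expose. So instead I would reason about the functions $f$, $g$, $g'$ as bare ZFC objects, relying only on the fact that composition and identities of functions satisfy associativity and the unit laws as plain theorems of ZFC.

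First I would recall the definition: a function $g \colon B \to A$ is an inverse of $f \colon A \to B$ when $g \circ f = \id_A$ and $f \circ g = \id_B$. Then, given two inverses $g$ and $g'$ of the same $f$, I would compute
\[
  g = g \circ \id_B = g \circ (f \circ g') = (g \circ f) \circ g' = \id_A \circ g' = g',
\]
using only the unit laws and associativity. This is the standard monoid argument for uniqueness of inverses, and there is no genuine mathematical obstacle in it.

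The real content lies in what the proof is \emph{not}, and this is the point I would stress. Proposition~\ref{prop:invset} cannot be obtained as an instance of Proposition~\ref{prop:invcat}, because its universal quantifier ranges over all sets $A$, $B$---a collection that is not the object-set of any category---rather than over the objects of a fixed $\catc$. An alternative route becomes available only if one assumes that every set lies in some universe: given $A$ and $B$, one would choose a universe $\univv$ with $A, B \in \univv$, note that $f$ and its candidate inverses are then $\univv$-small, and apply Proposition~\ref{prop:invsetu}. The hardest point is therefore conceptual rather than computational: recognising that this universe-based derivation silently depends on a universe-existence hypothesis that ZFC does not supply, whereas the direct calculation above needs no such assumption and so establishes the statement outright.
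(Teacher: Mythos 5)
Your proposal is correct and matches the paper's intent: the paper gives no explicit proof, remarking only that Proposition~\ref{prop:invset} is ``easy to prove directly'' (while stressing it cannot be deduced from Proposition~\ref{prop:invsetu} without the Universe Axiom), and your associativity/unit-law computation $g = g \circ (f \circ g') = (g \circ f) \circ g' = g'$ is exactly that direct argument. Your surrounding discussion of why the categorical shortcut fails also faithfully reproduces the paper's own point.
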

Proposition~\ref{prop:invsetu} is an instance of Proposition~\ref{prop:invcat}, but Proposition~\ref{prop:invset} (though easy to prove directly) cannot be deduced from Proposition~\ref{prop:invsetu}, because there might be no universe containing $A$ and $B$.  So the construction $\univv \mapsto \setu$ fails in its task of serving as a substitute for the category of all sets.   To avoid such difficulties, Grothendieck and Verdier~\cite{GrothendieckVerdier:sga4start} proposed the
  \emph{Universe Axiom}: every
  set belongs to a universe.
  Assuming this axiom allows us to deduce
  Proposition~\ref{prop:invset} from
  Proposition~\ref{prop:invsetu}.  (Even so, there remains a mismatch between the construction $\univv \mapsto \setu$ and the desired category of all sets.  See the discussion of reflection principles in~\cite{Shulman:setcat}.) 

This article is written both for people who assume the Universe Axiom and for those who do not.  Note that the books~\cite{AdamekHerrlichStrecker:concretecat,MacLane:categoriesbook} assume just one universe containing $\nats$.  

Henceforth, let $\univv$ be a universe.  We usually leave $\univv$ implicit, e.g.\ saying ``small'' for $\univv$-small, ``class'' for $\univv$-class, $\set$ for $\setu$, and $\class$ for $\classu$.

A set is \emph{essentially small} when it is isomorphic to a small set.  Essential smallness may seem a  more attractive notion than smallness, but there is no category of all essentially small sets.  For example,   ``essentially $\hfinite$-small'' means finite, and there is no category of all finite sets.  

\subsection{Small, light and moderate categories}

We consider the relationships between categories and $\univv$.
\begin{definition} \label{def:smalllightmodcat}
A category $\catc$ is
\begin{itemize}
\item \emph{small} when $\ob \catc$ and all
    the homsets are small
\item \emph{light} when $\ob \catc$ is a class and all the homsets are small
\item \emph{moderate} when $\ob \catc$ and all the homsets are classes~\cite{Shulman:exactcomp,Street:notionsoftop}.
\end{itemize}
We write
\begin{itemize}
\item $\catu$ for the 2-category of small categories
\item $\lightcatu$ for the 2-category of light categories
\item $\modcat$ for the 2-category of moderate categories.
\end{itemize}
\end{definition}
Thus $\catu \subsetneqq \lightcatu \subsetneqq \modcat$.  Here are some examples:
\begin{enumerate}
\item The category $\set_{\hfinite}$ is small, assuming $\nats \in \univv$.
\item The category  $\set$, and the category $\mathbf{Rel}$ of small sets and relations, are light but not small.
\item For sets $A,B$ a \emph{multirelation} 
  \begin{math}
    \xymatrix{
{A} \ar[r]|*=0{\shortmid}^{p} & {B}
}
  \end{math}
 is a family of cardinals $(p_{a,b})_{a \in A, b \in B}$.  The identity multirelation on a set $A$ is
 is given at $a,a' \in A$ by $1$ if $a=a'$ and $0$ otherwise; the composite of multirelations 
  \begin{math}
    \xymatrix{
{A} \ar[r]|*=0{\shortmid}^{p} & {B} \ar[r]|*=0{\shortmid}^{q} & {C}
}
  \end{math} 
is given at $a \in A, c \in C$ by $\sum_{b \in B} p_{a,b}q_{b,c}$.  The category $\mathbf{Multirel}$ of small sets and small multirelations (i.e.\ multirelations consisting of small cardinals) is moderate but not light.
\item The category $\class$ and the functor category $[\set,\set]$ are not moderate.
\end{enumerate}
Note that $\catu$ is cartesian closed but $\lightcatu$ and $\modcat$ are not.   If we want a cartesian closed 2-category containing $\set$, we may use $\catu_{\univp}$ for some universe $\univp$ larger than $\univv$, provided it exists (an instance of the Universe Axiom). 

The following conditions, weaker than lightness, are sometimes considered.
\begin{itemize}
\item A category is \emph{locally small} when all its homsets are
  small.  Thus a light category is one that is both moderate and
  locally small.  Some theorems about light categories, such as the
  adjoint functor theorems, hold more generally for locally small
  categories.  But it is hard to find natural examples of locally
  small categories that are not light, other than ones arising from a
  preordered set.  Moreover, there is no 2-category of all locally
  small categories.

\item A category is \emph{essentially light} when it is equivalent to a
  light category. For example, given a light category $\catc$, let $\smallpresh$ be the full subcategory of $[\catc^{\op},\set]$ on presheaves that are ``small'', i.e.\ isomorphic to the colimit of some small diagram of representables~\cite{DayLack:limitssmallfunc}.  This category is neither moderate nor locally small, but it is essentially light.  Moreover, via the Yoneda embedding, it is a free cocompletion of $\catc$.  So we might wish to view the construction $\catc \mapsto \smallpresh$ as a reflection of a 2-category of categories into a 2-category of cocomplete categories.  But we cannot, as there is no 2-category of essentially light categories.
\end{itemize}

\subsection{Quotient and tuple classes} \label{sect:qtup}

When working with classes, one must take care with the encoding of quotients and tuples.  
\begin{itemize}
\item  For an equivalence relation $R$ on a class $A$, the usual quotient $A/R$ is not a class. 
In order to form quotient classes, we first associate to every inhabited class $X$ an element $\theta X \in \univv$, in such a way that $\theta X \not= \theta Y$ whenever $X \cap Y =\emptyset$.  The following are two ways of doing this.
  \begin{enumerate}
  \item Let $\theta$ be a choice function on $\univv$, so $\theta X \in X$.
 \item Scott's trick: let $\theta X$ be the set of elements of $X$ of least rank. 
  \end{enumerate}
Now we set
  \begin{eqnarray*}
A \,/^*\, R & \eqdef & \setbr{[x]_R^{*} \mid x \in A}
\end{eqnarray*}
where $[x]_{R}^{*} \eqdef \theta \setbr{y \in A \mid (x,y) \in R}$.  The $^{*}$ superscript indicates a non-standard encoding.
\item For classes $A$ and $B$, the Kuratowski pair $(A,B) \eqdef \setbr{\setbr{A}, \setbr{A,B}}$ is not a class.  In order to form pair classes, following e.g.~\cite{AdamekHerrlichStrecker:concretecat}, we may use the encoding 
  \begin{eqnarray*}
    (A,B)^1 & \eqdef & \setbr{(0,x) \mid x \in A}\ \cup\ \setbr{(1,y) \mid y \in B}
  \end{eqnarray*}
Likewise, for a class $I$, we may encode an $I$-indexed tuple of classes by
\begin{eqnarray*}
  (A_i)^{1}_{i \in I} & \eqdef & \setbr{(i,x) \mid i \in I, x \in A_i} 
\end{eqnarray*}
A moderate category, encoded in this way, is a class.
\end{itemize}

\section{Subobjects} \label{sect:subob}

The theory of subobjects is commonly formulated using quotient classes.  We shall present this formulation and then propose a slight change.  The theory arises in the following situation.
\begin{definition}
   A wide subcategory $\catm$ of a category $\catc$ is \emph{mono-like} when
    \begin{itemize}
    \item  every $\catm$-morphism is monic in $\catc$
\item if a
  composite
  \begin{math}
    \xymatrix{ a \ar[r]^{f} & b \ar[r]^{g} & c }
  \end{math} is in $\catm$, then so is $f$.
    \end{itemize}
\end{definition}
Thus, in particular, all split monos are in $\catm$. Given a light category $\catc$ with a mono-like subcategory $\catm$, we proceed as follows.
\begin{definition}
Let $c \in \catc$.
  \begin{enumerate}
  \item  We form the class $\catm/c$ of pairs $(x,f)$ consisting of  $x \in \catc$ and an $\catm$-morphism $f \colon x \to c$, preordered as follows:  $(x,f) \sqsubseteq (y,g)$ when there is a morphism $h \colon x \to y$, necessarily unique and in $\catm$, making
     \begin{math}
       \xymatrix{
  x \ar[dr]_{f} \ar[r]^-{h} & y \ar[d]^{g} \\
 & c
}
     \end{math} commute.
\item  When   $(x,f)$ and $(y,g)$ are mutually related, the two mediating maps are mutually inverse, so we write $(x,f) \cong (y,g)$.
  \end{enumerate}
\end{definition}
Our task is to represent these pairs $(x,f)$ modulo $(\cong)$.  A commonly used formulation is as follows.
   \begin{definition}\label{def:wpowered}
For $c \in \catc$, the class of \emph{$\catm$-subobjects} of $c$ is 
    \begin{eqnarray*}
      \subsstar{c} & \eqdef & (\catm/c)\; /^{*} \;(\cong)
    \end{eqnarray*}
ordered as follows:
\begin{spaceout}{rclcrcl}
[(x,f)]^{*}_{\cong} & \leqslant & [(y,g)]^{*}_{\cong} &\betwixt \iffdef \betwixt\  & (x,f) & \sqsubseteq & (y,g)
\end{spaceout}%
We say $\catc$ is \emph{$\catm$-well-powered} when $\subsstar{c}$ is small for all $c \in \catc$.
  \end{definition}
Note that the isomorphic alternative
 \begin{eqnarray*}
      \subs{c} & \eqdef & (\catm/c)\;/ \;(\cong)
    \end{eqnarray*}
would be unsuitable.  For example, $\subsstar{1}$ is a subobject classifier in $\set$, but $\subs{1}$ is not, since it is not even an object.

Definition~\ref{def:wpowered} ingeniously makes  $\catm$-well-poweredness into a \emph{property} of $\catc$ and $\catm$, with no need for additional data.  But we propose a slight reformulation that, while it does require additional data, avoids the need for quotient classes.
\begin{definition}
Let $R$ be an equivalence relation on a set $A$.  A \emph{family of unique $R$-representatives} for $A$ is a set $I$ and family $(a_i)_{i \in I}$ of elements of $A$, such that, for every $a \in A$, there is a unique $i \in I$ for which $(a,a_i) \in R$. 
\end{definition}

 \,
\begin{definition} \label{def:wpowering}
An \emph{$\catm$-well-powering} $\catw$ assigns to each $c \in \catc$ a small family of unique $(\cong)$-representatives for $\catm/c$.  We write
\begin{eqnarray*}
  \catw \colon c & \mapsto & (\aset{U},\subo{U})_{U \in \subs{c}}
\end{eqnarray*}
We call $\subs{c}$ the set of \emph{$\catw$-subobject-indices} of $c$, ordered as follows.
\begin{spaceout}{rclcrcl}
  U & \leqslant & V & \beunixt\iffdef\beunixt\ & (\aset{U},\subo{U}) & \sqsubseteq  & (\aset{V},\subo{V})
\end{spaceout}%
\end{definition}

\begin{proposition}\hfill
  \begin{enumerate}
  \item \label{item:wellpowered} There is an $\catm$-well-powering $\catw$ iff $\catm$ is well-powered.  Moreover, $\catw$ is unique up to unique isomorphism.
  \item \label{item:recoverm} $\catm$ is determined by $\catw$.  Explicitly, a $\catc$-morphism $b \to c$ is in $\catm$ iff it is of the form
    \begin{math}
      \xymatrix{
 b  \ar[r]^-{g} & \aset{U}  \ar[r]^-{\subo{U}} & c
}
    \end{math} for a (necessarily unique) pair $(U,g)$ consisting of $U \in \subs{c}$ and an isomorphism $g \colon b \cong {} \aset{U}$.
  \end{enumerate}
\end{proposition}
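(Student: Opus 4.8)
The plan is to derive both parts from two basic features of the mono-like situation. The first is a \emph{rigidity} property: whenever $(x,f)$ and $(y,g)$ in $\catm/c$ are mutually $\sqsubseteq$-related, the mediating map between them is unique and invertible, because every $\catm$-morphism into $c$ is monic and $\catm$-morphisms are closed downward under composites. Thus each $\cong$-class carries a canonical, automorphism-free comparison. The second is a \emph{smallness transfer}: a set admitting a surjection from a small set is itself small, since for $I$ small and $f \colon I \to \univv$ the image $\setbr{f(i) \mid i \in I} = \bigcup_{i \in I}\setbr{f(i)}$ is small by closure of $\univv$ under pairing and union. (Here $\catm/c$, being a class, is a genuine set, so the definition of unique representatives applies to it.)

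For the forward direction of part~\ref{item:wellpowered}, suppose a well-powering $\catw$ is given. The defining property of a family of unique $(\cong)$-representatives says exactly that $U \mapsto [(\aset{U},\subo{U})]^{*}_{\cong}$ is a bijection $\subs{c} \to \subsstar{c}$: it is surjective because every $(x,f)$ is $\cong$ to some representative, and injective because two representatives equal in the quotient are $\cong$ and hence carry the same index. As $\subs{c}$ is small, smallness transfer makes $\subsstar{c}$ small, so $\catm$ is well-powered. Conversely, if $\catm$ is well-powered I build a well-powering by setting $\subs{c} \eqdef \subsstar{c}$ and selecting, for each index $U$, a representative $(\aset{U},\subo{U}) \in \catm/c$ of the $\cong$-class that $U$ names; these selections can be made uniformly in $c$ using a choice function $\theta$ on $\univv$ (which exists since $\univv$ is a set), and indeed when $\theta$ is the choice function used in the quotient encoding the elements of $\subsstar{c}$ are \emph{already} representatives, so no extra choice is needed. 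Smallness of $\subsstar{c}$ makes each family small, and the unique-representative property is immediate from the construction of the quotient.

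For uniqueness up to unique isomorphism, let $\catw$ and $\catw'$ be two well-powerings, with index sets $\subs{c}$ and $\subs{c}'$. For each $c$ I define $\phi_c \colon \subs{c} \to \subs{c}'$ by sending $U$ to the unique $V \in \subs{c}'$ whose $\catw'$-representative is $\cong (\aset{U},\subo{U})$; the unique-representative property of $\catw'$ makes this well defined, that of $\catw$ supplies an inverse, and $\phi_c$ is order-preserving because $\sqsubseteq$ is $\cong$-invariant. By rigidity, the mediating isomorphism from $\aset{U}$ to the corresponding $\catw'$-representative is forced to be unique. This family of order-isomorphisms-with-chosen-mediators is the required isomorphism $\catw \cong \catw'$, and it is the only one: any isomorphism must induce $\phi_c$ on indices (forced by the representative property) and these same mediators on representatives (forced by monicity).

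Finally, part~\ref{item:recoverm} falls out of the same machinery. If $m \colon b \to c$ factors as $\subo{U} \circ g$ with $g$ an isomorphism, then $g$ is a split mono, hence lies in $\catm$ (from $\id = g^{-1} \circ g \in \catm$ and mono-likeness), while $\subo{U} \in \catm$ and $\catm$ is closed under composition, so $m \in \catm$. Conversely, if $m \in \catm$ then $(b,m) \in \catm/c$, and the unique-representative property yields a unique $U$ with $(b,m) \cong (\aset{U},\subo{U})$; the comparison supplies an isomorphism $g \colon b \cong \aset{U}$ satisfying $\subo{U} \circ g = m$. A second such factorization would force $(\aset{U},\subo{U}) \cong (\aset{V},\subo{V})$, hence $U = V$ by uniqueness of representatives, after which $g$ is determined by monicity of $\subo{U}$. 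I expect the main obstacle to be pinning down the notion of isomorphism of well-powerings so that ``unique up to unique isomorphism'' becomes a precise, checkable statement; once rigidity of the $\cong$-classes is isolated, the remaining verifications are routine.
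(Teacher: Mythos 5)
Your proof is correct and takes essentially the same approach as the paper, whose entire proof is the one-line remark that the $(\Leftarrow)$ direction of part 1 ``is by the Axiom of Choice and the rest is straightforward'': your choice function $\theta$ on $\univv$ is precisely that appeal to Choice (made even sharper by your observation that under the choice-function quotient encoding the elements of $\subsstar{c}$ already are representatives), and the rest of your argument fills in the routine verifications the paper omits. One small caution: your opening claim that ``a set admitting a surjection from a small set is itself small'' is false as stated in general (consider the image $\setbr{\univv}$ of a one-element set), and needs the restriction---present in your own justification and in the application, since $\theta$-values lie in $\univv$---that the function takes values in $\univv$.
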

\begin{proof}
  (\ref{item:wellpowered})($\Leftarrow$) is by the Axiom of Choice and
  the rest is straightforward. \qed
\end{proof}

In many cases there is a canonical $\catm$-well-powering.  For example, a well-powering of $\set$ for injections is given by
 \begin{eqnarray*}
c & \mapsto & (U,i_{U})_{U \in \pset c}
\end{eqnarray*}
where $i_{U} \colon U \to c$ is the inclusion $x \mapsto x$.  Thus the subobject-indices of $c$ are subsets and ordered by inclusion, rather than sets of (set, injection) pairs.  

There is an evident dual notion of an \emph{$\cate$-co-well-powering} of $\catc$, where $\cate$ is an \emph{epi-like} subcategory.
Again, in many cases there is a canonical one.  For example, a co-well-powering of $\set$ for surjections is given by 
\begin{eqnarray*}
  c & \mapsto & (c/r, p_{r})_{r \in \equivr{c}}
\end{eqnarray*}
where $\equivr{c}$ is the set of equivalence relations on $c$, and $p_{r} \colon c \to c/r$ sends $x \mapsto [x]_{r}$.  Thus the quotient-indices of $c$ are equivalence relations and ordered by inclusion, rather than sets of (set, surjection) pairs.  

The convenience of these notions for categorical writing is illustrated in~\cite{Levy:finalcorecur} (though they are not explicitly formulated there).  The content of that paper is presented both in the general setting of a category with a factorization system and in special cases involving subsets and equivalence relations.  The latter cases are instances of the former---precisely, not just up to isomorphism---because of the use of subobject-indices and quotient-indices.

For another example where a family of unique representatives is used instead of a quotient class, see~\cite[Theorem 3.24]{AdamekMiliusMossSousa:wellpointed}.   

\section{A hierarchy of classes} \label{sect:hier}

\subsection{The target of the Yoneda lemma}

In Section~\ref{sect:kclass} we shall introduce a new notion of \emph{$k$-class}.  To motivate this,  we first discuss the Yoneda lemma.  For a light category $\catc$, we define in the usual way
\begin{itemize}
\item a functor $\yons \colon \catc \to [\catc^{\op},\set]$
\item for $c \in \catc$ and $F \colon \catc^{\op} \to \set$ and $x \in Fc$, a natural transformation $\yonset_{c,F}(x) \colon \yons{c} \to F$.
\end{itemize}
Here is our first attempt to state the Yoneda lemma:
  \begin{proposition} \label{prop:yonshort} Let $\catc$ be a light category.
    Then $\yonset_{c,F}$ is a bijection
    $Fc \cong [\catc^{\op},\set](\yons{c},F)$, natural in $c$ and
    $F$.
  \end{proposition}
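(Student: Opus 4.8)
The plan is to run the classical proof of the Yoneda lemma, keeping an eye on the fact that the target collection is a priori large. First I would construct a candidate inverse to $\yonset_{c,F}$: given a natural transformation $\alpha \colon \yons{c} \to F$, set $\Phi(\alpha) \eqdef \alpha_c(\id_c) \in Fc$, using that $\id_c \in \catc(c,c) = (\yons{c})(c)$. The two substantive verifications are the round-trip identities. For $\Phi \circ \yonset_{c,F} = \id_{Fc}$, given $x \in Fc$ I would compute $\Phi(\yonset_{c,F}(x)) = (\yonset_{c,F}(x))_c(\id_c) = (F\id_c)(x) = x$, where the last equality is functoriality of $F$. For the other composite $\yonset_{c,F} \circ \Phi = \id$, given $\alpha$ and an object $a$ with $f \in \catc(a,c)$, I would invoke naturality of $\alpha$ at the morphism $f$, namely $\alpha_a \circ (\yons{c})(f) = (Ff) \circ \alpha_c$, evaluate both sides at $\id_c$, and use $(\yons{c})(f)(\id_c) = \id_c \circ f = f$ to conclude $\alpha_a(f) = (Ff)(\alpha_c(\id_c))$; this is exactly the $a$-component of $\yonset_{c,F}(\Phi(\alpha))$, so the two natural transformations agree componentwise.

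With the bijection established for each fixed $c$ and $F$, I would then check naturality in the two variables. Naturality in $F$ says that, for $\beta \colon F \to G$, postcomposition with $\beta$ corresponds under the bijections to applying $\beta_c \colon Fc \to Gc$; naturality in $c$ says that, for $g \colon c \to c'$, precomposition with $\yons{g}$ corresponds to applying the functorial action $Fg$. Both statements unwind to a single evaluation at $\id$ followed by the same naturality square used above, so I expect them to be routine diagram chases rather than a real obstacle.

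The main obstacle, and the reason this is only a \emph{first attempt} at the statement, is not the algebra but the \textbf{size} of the target. Since $\catc$ is light, $\ob\catc$ is a class, so a natural transformation $\alpha \colon \yons{c} \to F$ is a class-indexed tuple $(\alpha_a)_{a \inob \catc}$ and is therefore typically a proper class, not a small set; consequently the whole collection $[\catc^{\op},\set](\yons{c},F)$ is a collection of (possibly proper) classes, which is not itself a class and so does not literally inhabit $\set$. What the map $\Phi$ achieves is precisely to tame this: $\Phi$ is injective, so each natural transformation is pinned down by the single small datum $\alpha_c(\id_c) \in Fc$, and the bijection exhibits the a priori large hom-collection as canonically corresponding to the small set $Fc$. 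I would therefore stress that the genuine content of the lemma is this reduction, and flag that giving $[\catc^{\op},\set](\yons{c},F)$ an honest status as a mathematical object — so that the word ``bijection'' is fully rigorous — is exactly what motivates the hierarchy of $k$-classes developed in the sequel.
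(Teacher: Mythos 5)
Your argument is the standard Yoneda proof and it is correct; note, though, that the paper itself gives no proof of this proposition --- it is stated as a ``first attempt'' whose role is precisely to expose the formulation problem that your final paragraph raises, and the surrounding text then discusses that problem rather than the classical verification you carried out. On that problem, one precision is worth making: the bijection itself is not where the size issue bites. Each natural transformation $\alpha \colon \yons{c} \to F$ is a class (its components $\alpha_a$ are elements of $\univv$, so the tuple is a subset of $\univv$), hence $[\catc^{\op},\set](\yons{c},F)$ is an honest set of classes --- a $2$-class in the paper's later terminology --- and a bijection between the small set $Fc$ and this set is fully rigorous in the ambient set theory. What is not rigorous, taken literally, is the clause ``natural in $c$ and $F$'': this asserts a natural isomorphism of functors, and the hom functor on $[\catc^{\op},\set]$ has no codomain among the categories available at this point, since these homsets are neither small nor classes. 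The paper observes that one may read the proposition as a figure of speech abbreviating exactly the elementwise identities you verify in your second paragraph, but, electing to read it literally, it supplies the missing codomain $\class_{2}$ in Section~\ref{sect:hier}. So your proposal matches the paper's (implicit) mathematics and correctly anticipates its resolution; the only slip is locating the obstacle in the word ``bijection'' rather than in the naturality clause.
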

Expanding this statement reveals a problem.
  \begin{proposition} Let $\catc$ be a light category.  Then we
    have a natural isomorphism
    \begin{displaymath}
      \xymatrix{
        \catc^{\op} \times [\catc^{\op},\set] \ar[d]_{\mathsf{app}} \ar[rrr]^-{\yons^{\op} \times [\catc^{\op},\set]} \ar@{}[drrr]|{\stackrel{\yonset}{\Longrightarrow}} & & & [\catc^{\op},\set]^{\op} \times  [\catc^{\op},\set] \ar[d]^{\mathsf{hom}} \\
        \set \ar@{^{(}->}[rrr] & & & ?
      }
    \end{displaymath}
  \end{proposition}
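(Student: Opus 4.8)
The plan is to exhibit the two bifunctors bounding the square and then read the filling $2$-cell straight off the Yoneda bijection of Proposition~\ref{prop:yonshort}; the substance of the statement lies not in the isomorphism but in the codomain marked ${?}$. Write $\catd = [\catc^{\op},\set]$. First I would define $\mathsf{app} \colon \catc^{\op} \times \catd \to \set$ to be evaluation, sending $(c,F) \mapsto Fc$ and a pair of morphisms $(f,\alpha) \colon (c,F) \to (c',G)$ to $Gf \circ \alpha_c = \alpha_{c'} \circ Ff \colon Fc \to Gc'$, the two expressions agreeing by naturality of $\alpha$; functoriality is the routine interchange check. Next I would take $\mathsf{hom} \colon \catd^{\op} \times \catd \to {?}$ to be the hom-bifunctor of $\catd$, acting on objects by $(F,G) \mapsto \catd(F,G)$ and on morphisms by pre- and post-composition. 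Since the top edge is $\yons^{\op} \times \id$, the clockwise composite is $(c,F) \mapsto \catd(\yons c, F)$ and the anticlockwise composite is $(c,F) \mapsto Fc$ followed by the inclusion of $\set$ into ${?}$.

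The natural isomorphism filling the square is $\yonset$, whose component at $(c,F)$ is the map $Fc \to \catd(\yons c, F)$, $x \mapsto \yonset_{c,F}(x)$. That each such component is a bijection, and that the family is natural in both $c$ and $F$, is exactly the content of Proposition~\ref{prop:yonshort}, which I would simply invoke. Consequently the only genuinely new work is bookkeeping: confirming that the edge-wise actions of $\mathsf{app}$, $\mathsf{hom}$ and $\yons$ on arrows match the naturality squares of $\yonset$, which reduces to unwinding definitions with no surprises.

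The hard part --- and the whole point of stating the proposition --- is the codomain ${?}$ of $\mathsf{hom}$, which is why it is left blank. For the square to be well-defined we need a single category ${?}$ that both receives the inclusion of $\set$, hence contains every small set as an object, and serves as target for $\mathsf{hom}$, hence contains every $\catd(F,G)$ as an object. On representables the latter is harmless, since by Yoneda $\catd(\yons c, F) \cong Fc$ is small; but $\mathsf{hom}$ is required to be a functor on all of $\catd^{\op} \times \catd$, and for arbitrary $F,G$ the collection $\catd(F,G)$ of natural transformations need not be small, so ${?} \neq \set$. Worse, each such natural transformation is itself a class-indexed family over $\ob\catc$, so $\catd$ is not even moderate (as already noted for $[\set,\set]$ among the examples), and these homsets need not be classes either, so ${?} \neq \class$. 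Thus within ZFC with a single universe there is no evident category to place at ${?}$, and this is precisely the gap that the hierarchy of $k$-classes in Section~\ref{sect:kclass} is designed to fill.
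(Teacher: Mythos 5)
Your setup of the square and your negative analysis are sound and agree with the paper: $\mathsf{app}$ is evaluation, $\mathsf{hom}$ is the hom-bifunctor of $[\catc^{\op},\set]$, the filling $2$-cell is exactly the bijection of Proposition~\ref{prop:yonshort}, and since the homsets $[\catc^{\op},\set](F,G)$ need not be small (nor even classes, a natural transformation being an $\ob\catc$-indexed family), neither $\set$ nor $\class$ can stand at ${?}$. The genuine gap is that you stop there. The proposition asserts a natural isomorphism valued in \emph{some} category, so a proof must end by exhibiting one, and the paper does exactly this: it takes ${?} = \class_{2}$, the category of $2$-classes. The verification is Proposition~\ref{prop:funccat}: since $\catc$ is light (hence $1$-moderate) and $\set$ is $2$-moderate, the functor category $[\catc^{\op},\set]$ is $2$-moderate, i.e.\ each homset is a set of $1$-entities---each natural transformation, being an $\ob\catc$-indexed family of small functions, is a class and hence a $1$-entity---so $\mathsf{hom}$ is a functor into $\class_{2}$; and every small set is a $2$-class, giving the bottom inclusion $\set \hookrightarrow \class_{2}$. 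This is how the paper closes the square in Section~\ref{sect:hier}, and it rests on Proposition~\ref{prop:psetiter}, not on any new idea beyond what you already assembled.

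Your closing sentence also undersells, indeed contradicts, the paper's point. It is not that ``within ZFC with a single universe there is no evident category to place at ${?}$''; rather, $\class_{2}$ is constructed from the single universe $\univv$ alone, with no larger universe and no appeal to the Universe Axiom---that is precisely what the hierarchy of $k$-classes buys (the paper explicitly rejects the alternative $\set_{\univp}$ as ``hardly relevant''). So the missing step is not a vague gap that Section~\ref{sect:kclass} ``is designed to fill''; it is the concrete answer ${?}=\class_{2}$ together with the short check, via Proposition~\ref{prop:funccat}, that both edges of the square factor through it.
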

  What should the target category be?

Before answering this, let us note that unpacking Proposition~\ref{prop:yonshort} gives a collection of statements that do not mention $\funccat{\catc^{\op}}{\set}$.  For example, the claim that $\yonset_{c,F}$ is natural in $F$ means that for any natural transformation 
\begin{math}
  \xymatrix{
 \catc^{\op} \ar@/^/[r]^{F} \ar@/_/[r]_{G} \ar@{}[r]|{\alpha \Downarrow} & \set
}
\end{math}
and $c \in \catc$ and $x \in Fc$, the composite 
\begin{math}
  \xymatrix{
 \yons{c} \ar[r]^-{\yonset_{c,F}(x)} & F \ar[r]^-{\alpha} & G
}
\end{math}
is $\yonset_{c,G}(\alpha_c x)$.    So we might view Proposition~\ref{prop:yonshort} as a mere figure of speech, summarizing this collection of statements.  But we are going to take it literally.  So we need a target category.

One option is to use $\set_{\univp}$, where
  $\univp$ is a universe greater than $\univv$ such that $\catc$ is
  $\univp$-small.  But while such a universe is guaranteed to exist if 
  the Universe Axiom is assumed, it is hardly relevant to the Yoneda lemma.
  After all, each homset of $[\catc^{\op},\set]$ is just a set of
  classes.  This suggests using a smaller category than $\set_{\univp}$, one that is not cartesian closed.

\subsection{$k$-classes} \label{sect:kclass}

To summarize our situation, we want to formulate the Yoneda lemma for a light category $\catc$ without mentioning a larger universe. Let us say that our target category will be the category of ``2-classes''.  What is a $2$-class?  

We certainly want every set of classes to be a 2-class. So, noting that $(\pset^{k}\univv)_{k \in \nats}$ is an increasing chain, it is reasonable to define a \emph{$k$-class} to be an element of $\pset^{k} \univv$.  But if we adopt this definition, then a binary product of $2$-classes is not a $2$-class, because a pair $(A,B)$ of classes is not a class.  Using the pair encoding $(A,B)^{1}$ from Section~\ref{sect:qtup} would only postpone the problem: a pair $(A,B)^{1}$ of 2-classes is not a 2-class.  

One solution would be to adopt a different encoding for each level:
\begin{eqnarray*}
  (A,B)^{0} & \eqdef & (A,B) \\
 (A_i)^{0}_{i \in I} & \eqdef &  (A_i)_{i \in I} \\
  (A,B)^{k+1} & \eqdef & \setbr{(0,x)^{k} \mid x \in A}\ \cup\ \setbr{(1,y)^{k} \mid y \in B} \\
  (A_i)^{k+1}_{i \in I} & \eqdef & \setbr{(i,x)^{k} \mid i \in I, x \in A_i} 
\end{eqnarray*}
Then $\pset^{k} \univv$ is closed under $(-,-)^{k}$ and, for $I \in \pset^{k}\univv$, under $(-)_{i \in I}^{k}$.  But having to continually distinguish all these encodings would be inconvenient.  Scott and McCarty~\cite{ScottMcCarty:ordpair} solved this problem by proving\footnote{This is a theorem of NBG class theory. 
} that there is a unique binary operation $(-,-)^{*}$ satisfying
\begin{eqnarray} \label{eqn:scottmc}
   (A,B)^{*} & = & \setbr{(0,x)^{*} \mid x \in A}\ \cup\ \setbr{(1,y)^{*} \mid y \in B} 
\end{eqnarray}
It is an ordered pair operation and every universe is closed under it.  They likewise encode indexed tuples:
\begin{eqnarray*}
   (A_i)^{*}_{i \in I} & \eqdef & \setbr{(i,x)^{*} \mid i \in I, x \in A_i} 
\end{eqnarray*}
It follows that $\pset^{k}\univv$ is closed under $(-,-)^{*}$ and, for $I \in \pset^{k}\univv$, under $(-)^{*}_{i \in I}$.

This is an ingenious solution, but we propose a different approach that avoids the need to replace the Kuratowski encoding. It uses the following construction.
\begin{definition}
  Let $A$ be a set of sets.  We inductively define the set $\iterunfam{A}$, or $\iterfam{A}$ for short, as follows.
\begin{itemize}
\item If $x \in \univv$, then $x \in \iterfam{A}$.
\item If $I \in A$, then $I \in \iterfam{A}$.
 \item If $x,y \in \iterfam{A}$, then $(x,y) \in \iterfam{A}$.
\item If $I \in A$, and $x_i \in \iterfam{A}$ for all $i \in I$, then $(x_i)_{i \in I} \in \iterfam{A}$.
\end{itemize}
Concisely, $\iterfam{A}$ is the least prefixpoint of
 \begin{math}
 X \beunixt \mapsto \beunixt  \univv \beunixt \cup \beunixt A\beunixt \cup\beunixt \cup \beunixt X \times X \beunixt \cup \beunixt\bigcup_{I \in A}   X^I
  \end{math}.
\end{definition}
Thus any element of $\iterfam{A}$ can be represented (not necessarily uniquely) by a well-founded tree that has
\begin{itemize}
\item leaves labelled by some $x \in \univv$
\item leaves labelled by some $I \in A$
\item binary nodes
\item and nodes labelled by some $I \in A$, which are $I$-ary.
\end{itemize}
Our key observation is that $\pset\iterfam{A}$ is closed under several constructions.  
\begin{proposition}\label{prop:psetiter}
  Let $A$ be a set of sets.
  \begin{enumerate}
\item If $B$ and $C$ are subsets of $\iterfam{A}$, then so are
    \begin{eqnarray*}
      B + C & \eqdef & \setbr{(0,b) \mid b \in B}\  \cup\  \setbr{(1,c) \mid c \in C} \\
\text{and } \beunixt     B \times C & \eqdef & \setbr{(b,c) \mid b \in B, c \in C}
    \end{eqnarray*}
  \item If $B$, and $C_b$ for all $b \in B$, are subsets of $\iterfam{A}$, then so is
    \begin{eqnarray*}
      \sum_{b \in B}C_b & \eqdef & \setbr{(b,c) \mid b \in B, c \in C_b}
    \end{eqnarray*}
  \item Let $I \in A$.  If $B_i$, for all $i \in I$, is a subset of $\iterfam{A}$, then so is
    \begin{eqnarray*}
      \prod_{i \in I}B_i & \eqdef & \setbr{(b_i)_{i \in I} \mid \forall i \in I.\, b_i \in B_i}
    \end{eqnarray*}
  \end{enumerate}
\end{proposition}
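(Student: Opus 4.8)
The plan is to verify each construction elementwise: I would show that every member of the displayed set already lies in $\iterfam{A}$, and since $\iterfam{A}$ is a set, the construction is then automatically a \emph{subset} of it. Everything reduces to the two ``building'' clauses in the inductive definition of $\iterfam{A}$, namely the binary pairing clause (if $x,y \in \iterfam{A}$ then $(x,y) \in \iterfam{A}$) and the $I$-ary tuple clause (if $I \in A$ and $x_i \in \iterfam{A}$ for all $i \in I$, then $(x_i)_{i \in I} \in \iterfam{A}$), together with the base clause $\univv \subseteq \iterfam{A}$.

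First I would dispatch the products. For $B \times C$, any element $(b,c)$ has $b,c \in \iterfam{A}$, so $(b,c) \in \iterfam{A}$ by the pairing clause; the dependent sum $\sum_{b \in B} C_b$ is identical, since each $(b,c)$ again has both coordinates in $\iterfam{A}$. For the indexed product $\prod_{i \in I} B_i$, any element $(b_i)_{i \in I}$ satisfies $b_i \in B_i \subseteq \iterfam{A}$ for every $i$, so it lies in $\iterfam{A}$ by the tuple clause---this is precisely the place where the hypothesis $I \in A$ is used.

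The only clause needing a remark is the coproduct $B + C$, whose elements are the tagged pairs $(0,b)$ and $(1,c)$. To apply the pairing clause I must first know that the tags $0$ and $1$ themselves lie in $\iterfam{A}$. This holds because $0 = \emptyset$ and $1 = \setbr{\emptyset}$ both belong to the universe $\univv$ (the universe axioms give $\emptyset \in \univv$, and pairing inside $\univv$ gives $\setbr{\emptyset} \in \univv$), whence $0,1 \in \univv \subseteq \iterfam{A}$. Then $(0,b)$ and $(1,c)$ lie in $\iterfam{A}$ by pairing, as required.

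I do not expect a genuine obstacle: the substance of the proposition is carried entirely by the inductive definition of $\iterfam{A}$, and the only point demanding any care is the bookkeeping observation that the coproduct tags are drawn from $\univv$. For completeness I would also note in passing that each construction is a legitimate set in ZFC---for instance $\sum_{b \in B} C_b \subseteq B \times \bigcup_{b \in B} C_b$ and $\prod_{i \in I} B_i \subseteq {}^{I}\!\bigl(\bigcup_{i \in I} B_i\bigr)$---so that the phrase ``subset of $\iterfam{A}$'' is meaningful.
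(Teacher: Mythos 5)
Your proof is correct: each clause follows element-wise from the pairing and $I$-ary tuple clauses in the inductive definition of $\iterfam{A}$, and the one point requiring care---that the tags satisfy $0,1\in\univv\subseteq\iterfam{A}$, via $\emptyset\in\univv$ and pairing inside $\univv$---is handled properly. The paper offers no proof at all (the proposition is presented as a ``key observation'' immediate from the definition), so your element-wise verification is precisely the routine argument the paper leaves implicit.
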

Let us write $\univvs$ for the set of sets in $\univv$.  (In ZFC, everything is a set so $\univvs=\univv$.  But in a set theory that allows urelements, $\univvs$ might be a proper subset of $\univv$.)  Since $\iterfamo$ and $\pset$ are monotone and $\iterfam{\univvs} = \univv$, we have
\begin{displaymath}
  \begin{array}{ccccc}
    \univvs & & & \subseteq  & \univv \\
\vsubseteq & &  & &  \vsubseteq \\
    \pset \iterfam{\univvs} & = & \pset \univv & \subseteq & \iterfamo \pset \univv \\
\vsubseteq & & \vsubseteq & &  \vsubseteq \\
    \pset \iterfamo \pset \iterfamo \univvs & = & \pset \iterfamo \pset \univv & \subseteq & \iterfamo \pset \iterfamo \pset \univv \\
\vsubseteq & & \vsubseteq & &  \vsubseteq \\
 \pset \iterfamo \pset \iterfamo \pset \iterfamo \univvs & = & \pset \iterfamo \pset \iterfamo \pset \univv & \subseteq & \iterfamo \pset \iterfamo \pset \iterfamo \pset \univv \\
\vsubseteq & & \vsubseteq & &  \vsubseteq \\
\vdots & & \vdots & & \vdots
  \end{array}
\end{displaymath}
This suggests the following definition.
\begin{definition}\hfill
  \begin{enumerate}
 \item A \emph{$(\univv,k)$-entity}, or \emph{$k$-entity} for short, 
    is an element of $(\iterfamo\univvs)^{k}\univv$.
  \item A \emph{$(\univv,k)$-class}, or \emph{$k$-class} for short, 
    is an element of $(\pset\iterfamo)^{k}\univvs$.  
\item The category of
    $k$-classes is called $\class_{k}$.
  \end{enumerate}
\end{definition}
Thus ``$0$-class'' means small set and ``$(k+1)$-class'' means set of $k$-entities.  Moreover, every $k$-class is a $k$-entity.  

Proposition~\ref{prop:psetiter} gives the following ways of constructing $k$-classes.  For $k=0$ we read ``$k-1$'' as 0. 
\begin{theorem} \hfill
  \begin{enumerate}
  \item If $B$ and $C$ are $k$-classes, then so are $B + C$ and $B \times C$.
  \item If $B$, and $C_b$ for all $b \in B$, are $k$-classes, then so is $\sum_{b \in B}C_i$. 
  \item Let $I$ be a $(k-1)$-class.  If $B_i$, for all $i \in I$, is a $k$-class, then so is $\prod_{i \in I}B_i$. 
  \end{enumerate}
\end{theorem}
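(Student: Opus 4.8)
The plan is to reduce the whole statement, for $k \geq 1$, to a single application of Proposition~\ref{prop:psetiter}, after unwinding what a $k$-class actually is. First I would record the recursive reading of the definition: since $(\pset\iterfamo)^{k}\univvs = \pset\iterfam{(\pset\iterfamo)^{k-1}\univvs}$, a $k$-class is exactly an element of $\pset\iterfam{A}$, where $A \eqdef (\pset\iterfamo)^{k-1}\univvs$ is the set of all $(k-1)$-classes. Equivalently, writing $A$ for the set of $(k-1)$-classes, a $k$-class is precisely a subset of $\iterfam{A}$, and a $(k-1)$-class is precisely an element of $A$. I would also note that $A$ is a set of sets (for $k=1$ it is $\univvs$; for $k \geq 2$ it is a power set), so that Proposition~\ref{prop:psetiter} is applicable to it.

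With this dictionary in hand, the three clauses become verbatim instances of the three clauses of Proposition~\ref{prop:psetiter} applied to $A$. For clauses~1 and~2, the hypotheses say that $B$, $C$ and the $C_b$ are $k$-classes, i.e.\ subsets of $\iterfam{A}$; Proposition~\ref{prop:psetiter}(1,2) then gives that $B+C$, $B \times C$ and $\sum_{b \in B} C_b$ are again subsets of $\iterfam{A}$, hence $k$-classes. For clause~3 the level bookkeeping is the crucial point: the hypothesis that $I$ is a $(k-1)$-class is exactly the statement $I \in A$ required by Proposition~\ref{prop:psetiter}(3), while each $B_i$ being a $k$-class is $B_i \subseteq \iterfam{A}$; the Proposition then yields $\prod_{i \in I} B_i \subseteq \iterfam{A}$, i.e.\ a $k$-class. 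This is why the index must be taken one level down: a genuine $k$-class index need not lie in $A$, and $\iterfam{A}$ is only guaranteed to be closed under $I$-indexed tuples for $I \in A$.

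Finally I would dispose of the base case $k=0$ separately, since a $0$-class is an element of $\univvs$ rather than a subset of some $\iterfam{A}$, so the uniform argument does not apply. Here $B + C$, $B \times C$, $\sum_{b\in B}C_b$ and (reading $k-1$ as $0$, so that $I$ is also a small set) $\prod_{i \in I} B_i$ are all small directly from the closure axioms for the universe $\univv$ --- pairing, power set, and set-indexed unions, together with the fact that a subset of a small set is small --- applied to the explicit encodings of sum and product.

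The only real obstacle is the off-by-one level accounting in clause~3, namely verifying that the index set appearing in Proposition~\ref{prop:psetiter}(3) must be an \emph{element} of $A$ and that this is precisely ``$(k-1)$-class''. Everything else is a matter of reading the definition of $k$-class as $\pset\iterfam{A}$ and quoting Proposition~\ref{prop:psetiter}; no new combinatorial content is needed beyond what that Proposition already supplies.
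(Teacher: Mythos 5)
Your proposal is correct and matches the paper's intended argument exactly: the paper derives this theorem directly from Proposition~\ref{prop:psetiter}, and your unwinding of ``$k$-class'' as an element of $\pset\iterfam{A}$ with $A$ the set of $(k-1)$-classes is precisely the reduction that makes the three clauses verbatim instances of that proposition. Your separate treatment of $k=0$ (where the result is smallness and follows from the universe closure axioms rather than from Proposition~\ref{prop:psetiter}) is a point the paper glosses over with its ``read $k-1$ as $0$'' convention, so if anything you are slightly more careful than the source.
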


\noindent\textbf{Remark}   In view of the \emph{Ackermann coding} $\nats \cong \hfinite$, perhaps $(\hfinite,k)$-classes might constitute a convenient model of higher-order arithmetic, cf.~\cite{KayeWong:arithset}.



\subsection{$k$-moderate categories}

We shall see that $k$-classes provide useful relationships between categories and $\univv$.
\begin{definition}\label{def:modcat}
  A category $\catc$ is \emph{$k$-moderate} when $\ob \catc$ and all the homsets are  $k$-classes.
\end{definition}
Thus ``0-moderate'' means small, and ``$(k+1)$-moderate'' means that all objects and morphisms are $k$-entities. 

Proposition~\ref{prop:psetiter} implies the following.
\begin{proposition} \label{prop:funccat}
  A functor category $[\catc,\catd]$ is
  \begin{itemize}
  \item small if $\catc$ and $\catd$ are small
  \item light if $\catc$ is small and $\catd$ light
  \item $k$-moderate if $\catc$ is $(k-1)$-moderate and $\catd$ is $k$-moderate.
  \end{itemize}
\end{proposition}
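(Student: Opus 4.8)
The plan is to check, in each of the three cases, that the object set $\ob[\catc,\catd]$ and every hom-set $[\catc,\catd](F,G)$ land at the prescribed level, using only the closure operations of Proposition~\ref{prop:psetiter} together with two elementary facts. First, every subset of a $k$-class is again a $k$-class: for $k\geq 1$ a $k$-class is by definition a subset of $\iterfamo$ applied to the previous level, so any of its subsets is too, and for $k=0$ a subset of a small set is small. Second, every $k$-class is a $(k+1)$-class, which is precisely what the vertical inclusions in the displayed chain above record. As a preliminary I would note that the arrow class $\mathrm{Mor}(\catd)=\sum_{(u,v)}\catd(u,v)$ of a $k$-moderate category is itself a $k$-class, since $\ob\catd\times\ob\catd$ is a $k$-class by part~1 of Proposition~\ref{prop:psetiter} and the sum is then a $k$-class by part~2; dually $\mathrm{Mor}(\catc)$ is a $(k-1)$-class when $\catc$ is $(k-1)$-moderate.

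For the object set I would encode a functor $F$ as the pair of its object map $x\mapsto Fx$ and its morphism map $f\mapsto Ff$. The object map is an element of $\prod_{x\in\ob\catc}\ob\catd$, whose index $\ob\catc$ is a $(k-1)$-class and whose every factor $\ob\catd$ is a $k$-class, so part~3 makes it a $k$-class. The obstacle is the morphism map, because the natural target $\catd(Fx,Fy)$ of $Ff$ depends on $F$ itself, so the collection of morphism maps cannot be presented directly as a single product. I would circumvent this by over-approximating, letting the morphism map range in $\prod_{f\in\mathrm{Mor}(\catc)}\mathrm{Mor}(\catd)$; here the index $\mathrm{Mor}(\catc)$ is a $(k-1)$-class and each factor $\mathrm{Mor}(\catd)$ is a $k$-class, so part~3 again yields a $k$-class. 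Pre-functors thus form a product of two $k$-classes, hence a $k$-class by part~1, and the genuine functors are the subset cut out by the domain/codomain, identity and composition conditions, so they form a $k$-class as well.

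For the hom-sets there is no such circularity, since the components $\alpha_x\in\catd(Fx,Gx)$ of a natural transformation $\alpha\colon F\to G$ have targets determined by the fixed $F$ and $G$. I would therefore use the tight product $\prod_{x\in\ob\catc}\catd(Fx,Gx)$, whose index $\ob\catc$ is a $(k-1)$-class and whose factors are $k$-classes; part~3 makes it a $k$-class, and the natural transformations are the subset satisfying the naturality squares, hence a $k$-class.

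Finally I would read off the three cases from this single argument. The $k$-moderate case is exactly the above. The small case is its $k=0$ instance, reading $k-1$ as $0$ by the stated convention, so that all products are small products indexed by small sets and remain inside $\univv$. For the light case $\catc$ is small and $\catd$ is light, so $\ob\catd$ is a $1$-class while the hom-sets of $\catd$ are $0$-classes; I would run the object-set argument at level $1$, using that each $0$-class hom-set is also a $1$-class so that $\mathrm{Mor}(\catd)$ is a $1$-class, to conclude that $\ob[\catc,\catd]$ is a $1$-class, and run the hom-set argument at level $0$ with the genuinely small hom-sets $\catd(Fx,Gx)$ indexed by the small set $\ob\catc$, so that each $[\catc,\catd](F,G)$ is small. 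The only real subtlety throughout is the $F$-dependence of the targets in the object-set case, handled uniformly by the over-approximation-plus-subset device.
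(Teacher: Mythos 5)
Your proof is correct and takes essentially the same route as the paper: the paper gives no detailed argument at all, stating only that Proposition~\ref{prop:psetiter} ``implies'' the result, and your write-up is a sound instantiation of exactly that derivation. Your encoding of functors as pairs of an object map and a morphism map, the over-approximation-plus-subset device for the $F$-dependent targets, and the tight product $\prod_{x\in\ob\catc}\catd(Fx,Gx)$ for hom-sets are precisely the details the paper leaves implicit, and each step invokes the closure properties (plus the subset and level-inclusion facts) legitimately.
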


\begin{corollary}
 The category $\class_k$ is $(k+1)$-moderate.
\end{corollary}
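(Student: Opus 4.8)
The plan is to verify Definition~\ref{def:modcat} directly: I must show that $\ob\class_k$ and every homset of $\class_k$ is a $(k+1)$-class. Write $S_k \eqdef (\pset\iterfamo)^{k}\univvs$ for the set of $k$-classes, so that $\ob\class_k = S_k$ and, unwinding the definitions, a $(k+1)$-class is precisely a subset of $\iterfamo S_k$ (that is, an element of $\pset\iterfamo S_k = S_{k+1}$). First I would record that $S_k$ is a set of sets: each $k$-class is a set, being either a small set (when $k=0$) or a subset of $\iterfamo S_{k-1}$ (when $k\geq 1$). Hence $\iterfamo S_k$ is defined, and the displayed chain preceding the definition of $k$-class applies; in particular its left column gives the inclusions $S_k \subseteq S_{k+1}$, i.e.\ every $k$-class is a $(k+1)$-class.

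For the object part, I would invoke the defining clause of $\iterfamo$ stating that $I \in S \Rightarrow I \in \iterfamo S$ for any set of sets $S$. Taking $S = S_k$ yields $S_k \subseteq \iterfamo S_k$, so $S_k \in \pset\iterfamo S_k = S_{k+1}$. Thus $\ob\class_k$ is a $(k+1)$-class.

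For the homsets, fix $k$-classes $A,B$ and observe that $\class_k(A,B)$ is the set $B^{A}$ of functions $A \to B$, which---identifying a function with the $A$-indexed tuple of its values---is exactly the dependent product $\prod_{a \in A} B = \setbr{(b_a)_{a \in A} \mid \forall a \in A.\ b_a \in B}$. Now $A$ is a $k$-class, so $A \in S_k$, while $B$ is a $k$-class and hence, by $S_k \subseteq S_{k+1}$, also a $(k+1)$-class, i.e.\ $B \subseteq \iterfamo S_k$. I would then apply Proposition~\ref{prop:psetiter}(3) with its set-of-sets parameter instantiated to $S_k$, its index $I$ to $A \in S_k$, and all its factors equal to the single set $B \subseteq \iterfamo S_k$, concluding that $\prod_{a \in A} B \subseteq \iterfamo S_k$. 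Therefore $\class_k(A,B)$ is a $(k+1)$-class. This is the same mechanism, shifted up one level, that underlies the $k$-moderacy clause of Proposition~\ref{prop:funccat}.

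The step I expect to be most delicate is lining up the levels in the product clause: the index $A$ and each factor $B$ are \emph{both} $k$-classes, yet the product must land one level higher, so the argument hinges on reading Proposition~\ref{prop:psetiter}(3) with parameter $S_k$ (not $S_{k-1}$) and on silently upgrading each $k$-class factor $B$ to a $(k+1)$-class via $S_k \subseteq S_{k+1}$. I would also be careful over the encoding point that a function $A \to B$ literally \emph{is} the indexed tuple $(f(a))_{a \in A}$, so that it is admitted into $\iterfamo S_k$ by the indexed-tuple constructor, with $A \in S_k$ as its index, rather than requiring a separate graph-as-set-of-Kuratowski-pairs argument.
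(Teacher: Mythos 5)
Your proof is correct and is essentially the paper's own (implicit) argument: the corollary is stated there without proof as a direct consequence of the preceding machinery, and your verification---objects handled via the clause that $I \in A$ implies $I \in \iterfam{A}$ applied with $A = S_k$, homsets handled via Proposition~\ref{prop:psetiter}(3) with parameter $S_k$, index the $k$-class $A$, and constant factor $B$, upgraded to a $(k+1)$-class through the inclusion $S_k \subseteq S_{k+1}$ from the displayed chain---is exactly that consequence written out. Your two points of care (reading the product clause one level up, and identifying a function $A \to B$ with the $A$-indexed tuple of its values, which under the standard encoding is literally the same set) are precisely the details the paper's conventions rely on, so nothing is missing.
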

If $\catc$ is light (hence 1-moderate), then  $[\catc,\set]$ is $2$-moderate, by Proposition~\ref{prop:funccat}.  So we can formulate the Yoneda lemma as follows.
\begin{proposition} Let $\catc$ be a light category.  Then we have a natural isomorphism
\begin{displaymath}
  \xymatrix{
 \catc^{\op} \times [\catc^{\op},\set] \ar[d]_{\mathsf{app}} \ar[rrr]^-{\yons \times [\catc^{\op},\set]} \ar@{}[drrr]|{\stackrel{\yonset}{\Longrightarrow}} & & & [\catc^{\op},\set]^{\op} \times  [\catc^{\op},\set] \ar[d]^{\mathsf{hom}} \\
 \set \ar@{^{(}->}[rrr] & & & \class_{2}
}
\end{displaymath}
\end{proposition}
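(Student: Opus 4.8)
The statement is the classical Yoneda lemma, repackaged so that the target of the hom-side is the category $\class_{2}$; accordingly the work splits into a size-bookkeeping part and a classical part. The plan is first to check that every functor in the square is well-defined with the indicated (co)domain. The evaluation functor $\mathsf{app} \colon \catc^{\op} \times [\catc^{\op},\set] \to \set$ sends $(c,F) \mapsto Fc$, which is a small set, so it does land in $\set$. Since $\catc^{\op}$ is light, hence $1$-moderate, and $\set$ is likewise $1$-moderate and therefore $2$-moderate (every $1$-class is a $2$-class by the inclusion chain preceding the definition of $k$-classes), Proposition~\ref{prop:funccat} shows that $[\catc^{\op},\set]$ is $2$-moderate. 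In particular each of its homsets is a $2$-class, i.e.\ an object of $\class_{2}$, and pre- and post-composition are functions between such homsets; so the hom-functor $\mathsf{hom} \colon [\catc^{\op},\set]^{\op} \times [\catc^{\op},\set] \to \class_{2}$ is well-defined. Finally, every object of $\set$ is a small set, hence a $2$-class, and $\set$-functions are $\class_{2}$-morphisms, so the bottom edge is the evident fully faithful inclusion $\set \hookrightarrow \class_{2}$.

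Next I would invoke the ordinary Yoneda lemma in the unpacked form of Proposition~\ref{prop:yonshort}: for each $c \in \catc$ and $F \colon \catc^{\op} \to \set$ the map $\yonset_{c,F} \colon Fc \to [\catc^{\op},\set](\yons c, F)$ is a bijection, and this family is natural in $c$ and in $F$. The naturality in $F$ is exactly the statement unpacked earlier (compatibility with any $\alpha \colon F \to G$), and naturality in $c$ is the dual statement; together they say precisely that $\yonset$ is a natural transformation between the two composite functors $\catc^{\op} \times [\catc^{\op},\set] \to \class_{2}$ given by the lower path $(c,F) \mapsto Fc$ and the upper path $(c,F) \mapsto [\catc^{\op},\set](\yons c, F)$, the latter built from the Yoneda embedding $\yons$ in the first coordinate and the identity in the second.

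To conclude, I would observe that each component $\yonset_{c,F}$ is a function between two objects of $\class_{2}$ --- the domain $Fc$ being a small set viewed as a $2$-class, the codomain a $2$-class --- and that it is a bijection. Since $\class_{2}$ is the category of $2$-classes and functions, a bijection between $2$-classes is an isomorphism there; hence every component of $\yonset$ is invertible, and $\yonset$ is a natural isomorphism. The naturality squares are equalities of functions, so they hold in $\class_{2}$ exactly when they hold set-theoretically, which the classical lemma already guarantees.

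The main obstacle is not the classical bijection, which is routine, but the size bookkeeping: one must confirm that the codomain of $\mathsf{hom}$ is genuinely $\class_{2}$ and not something larger, and this rests entirely on the closure properties of $\pset\iterfamo$ recorded in Proposition~\ref{prop:psetiter} (applied via Proposition~\ref{prop:funccat}). The whole point of the $k$-class hierarchy is to make this typecheck without passing to a larger universe, so the substance of the proof lies in verifying that $[\catc^{\op},\set]$ is $2$-moderate and that its hom-functor therefore factors through $\class_{2}$.
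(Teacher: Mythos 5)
Your proposal is correct and matches the paper's (largely implicit) justification exactly: the paper offers no separate proof, relying on the sentence immediately preceding the proposition --- that $[\catc^{\op},\set]$ is $2$-moderate by Proposition~\ref{prop:funccat} since $\catc$ is light, hence $1$-moderate --- so that $\mathsf{hom}$ lands in $\class_{2}$, with the bijection and naturality being the classical content already unpacked around Proposition~\ref{prop:yonshort}. Your write-up simply makes explicit the same two ingredients (size typechecking via Propositions~\ref{prop:psetiter} and~\ref{prop:funccat}, then the ordinary Yoneda lemma), so there is nothing to add.
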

Note, by the way, the requirement for $\catc$ to be light, i.e.\ both moderate and locally small.  The statement would not make sense if we weakened the moderateness assumption to essential moderateness, or the local smallness assumption to local essential smallness.

\subsection{Higher categories}

Let us now consider
\begin{itemize}
\item the 2-category $\catu$ of small categories
\item the 2-category $\lightcatu$ of light categories
\item the 2-category $\modcatku$ of $k$-moderate categories.
\end{itemize}
What is the relationship between these 2-categories and $\univv$?  In order to answer this question, let us formulate, more generally, relationships between $n$-categories and $\univv$.  

We fix $n$, where $n \in \nats \cup \setbr{\infty}$.  For $n = \infty$ we assume $\nats \in \univv$ (as there does not appear to be a reasonable notion of $\hfinite$-small $\infty$-category) and read ``$n+1$'' as $\infty$.   

An $n$-category (which in this article means \emph{weak} $n$-category) consists of two parts.  Firstly, a collection of \emph{$r$-homsets}, for $0 \leqslant r < n+1$.  More precisely we have
\begin{itemize}
\item the \emph{$0$-homset} $\catc()$, i.e.\ set of objects
\item for any $a_0, b_0 \in \catc()$, the \emph{$1$-homset} $ \catc(a_0,b_0)$
\item for any $a_0,b_0  \in \catc()$ and $a_1,b_1 \in \catc(a_0,b_0)$, the \emph{$2$-homset} $\catc(a_0,b_0;a_1,b_1)$
\item and so forth.
\end{itemize}
Secondly some structure, which we omit.  Many definitions have been proposed (see e.g.~\cite{Leinster:survey}) and we shall not adopt any particular one.  So the statements in this section are merely proposals that we expect to be true for any reasonable notion of (weak) $n$-category.  

We shall now define the properties displayed in Figure~\ref{fig:ncatconstr}.
\begin{figure} {
    \begin{tabular}{|l|l|l|l|l|l|l} \hline
      Small & & & & &  &  \\
      0-light & Light & 2-light & \ldots & $(n+1)$-light
                       &  \\
      0-moderate & & & & Moderate & 2-moderate & \ldots
      \\ \hline
    \end{tabular} }
   \caption{Properties of an $n$-category, in order of increasing liberality}
  \label{fig:ncatconstr}
\end{figure}
In so doing we generalize Definitions~\ref{def:smalllightmodcat} and~\ref{def:modcat}. 
\begin{definition}
Let $\catc$ be an $n$-category.
  \begin{enumerate}
  \item We say $\catc$ is \emph{small} when, for  $0 \leqslant r < n+1$, each $r$-homset is small.
 \item Let $0 \leqslant k \leqslant n+1$.  We say $\catc$ is \emph{$k$-light} when
   \begin{itemize}
   \item for $0 \leqslant r < k$, each $r$-homset is a class
  \item for $k \leqslant r < n+1$, each $r$-homset is small.
   \end{itemize}
 \item Let $k \in \nats$.  We say $\catc$ is \emph{$k$-moderate} when, for  $0 \leqslant r < n+1$, each $r$-homset is a $k$-class.
\end{enumerate}
As usual the ``$1$-'' prefix may be omitted.
\end{definition}
Thus ``0-moderate'' means small, and ``$(k+1)$-moderate'' means that, for  $0 \leqslant r < n+1$, all $r$-cells are $k$-entities. 

We generalize Proposition~\ref{prop:funccat} as follows.

 \begin{propprop}
For $n$-categories $\catc$ and $\catd$, the functor $n$-category $[\catc,\catd]$ is 
  \begin{itemize}
  \item small if $\catc$ and $\catd$ are small
  \item $k$-light if $\catc$ is small and $\catd$ is $k$-light
  \item $k$-moderate if $\catc$ is $(k-1)$-moderate and $\catd$ is $k$-moderate.
  \end{itemize}
\end{propprop}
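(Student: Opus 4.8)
The plan is to write each homset of the functor $n$-category $[\catc,\catd]$ as a subset of an iterated dependent sum-and-product assembled from the homsets of $\catc$ and $\catd$, and then read off its size from the closure properties of Proposition~\ref{prop:psetiter} (in their $k$-class form). Since no particular definition of weak $n$-category is fixed, everything hinges on one structural feature that I expect to hold for any reasonable notion: an $r$-cell of $[\catc,\catd]$ is precisely the data of an $(s+r)$-cell of $\catd$ for each $s$-cell of $\catc$ (for all $s$ with $s+r<n+1$), with boundary prescribed by the lower-dimensional data, subject to equational coherence conditions. A $0$-cell (functor) sends each $s$-cell of $\catc$ to an $s$-cell of $\catd$; a $1$-cell (transformation) sends objects to component $1$-cells and $1$-cells to naturality $2$-cells; a $2$-cell (modification) sends objects to $2$-cells; and so on, the dimension being shifted by $r$ throughout.

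Granting this, I would build the $r$-homset by induction on the dimension $s$. The data at level $s$, for a fixed choice of lower data, is a family indexed by the $s$-cells of $\catc$ whose member at a given cell is an $(s+r)$-cell of $\catd$ with prescribed boundary; this is a $\prod$ whose index is an $s$-homset of $\catc$ and whose fibres lie among the $(s+r)$-homsets of $\catd$. Bundling the levels by iterated $\sum$ (to absorb the dependence of each boundary on the lower data) exhibits the total cell-set as built from homsets of $\catc$ as indices and homsets of $\catd$ as fibres, using only $\sum$ and $\prod$; the coherence equations then cut out a subset. For $n=\infty$ there are countably many levels, but $\nats\in\univv$ is assumed, so the outermost index $\nats$ is small. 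Finally I note that a subset of a $k$-class is again a $k$-class: a $k$-class is by definition a subset of $\iterfam{X}$ with $X$ the set of $(k-1)$-classes, a property inherited by subsets (in particular, for $k=0$, a subset of a small set is small).

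It then remains to run the three cases. \textbf{Small case:} if $\catc$ and $\catd$ are small, every index and every fibre is small, so every $\sum$ and $\prod$ is small and so is each subset; hence $[\catc,\catd]$ is small. \textbf{$k$-light case:} if $\catc$ is small and $\catd$ is $k$-light, every index is small. For $r\geqslant k$ every fibre is an $(s+r)$-homset with $s+r\geqslant k$, hence small, so the $r$-homset is small. For $r<k$ each fibre is a class when $s+r<k$ and small (hence a class) otherwise; a small-indexed product of classes is a class and a subset of a class is a class, so the $r$-homset is a class. Thus $[\catc,\catd]$ is $k$-light. \textbf{$k$-moderate case:} if $\catc$ is $(k-1)$-moderate and $\catd$ is $k$-moderate, every index is a $(k-1)$-class and every fibre a $k$-class; a $\prod$ with $(k-1)$-class index and $k$-class fibres is a $k$-class, a $\sum$ of $k$-classes over a $k$-class is a $k$-class, and a subset of a $k$-class is a $k$-class, so every $r$-homset is a $k$-class and $[\catc,\catd]$ is $k$-moderate.

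The main obstacle is the first step: pinning down the shape of an $r$-cell uniformly across notions of weak $n$-category. The size bookkeeping is then routine, but the claims that every coherence cell is again indexed by cells of $\catc$ and valued in cells of $\catd$, that no homset outside the stated range is used, and that the dimension shift by $r$ is exact, are exactly what would have to be checked against whatever definition one adopts---which is why the statement is offered as a \emph{Proposed Theorem} rather than proved outright.
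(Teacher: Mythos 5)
You should know at the outset that the paper contains no proof of this statement to compare against: it is deliberately labelled a \emph{Proposed Theorem}, and the paper states that, since it declines to adopt any particular definition of weak $n$-category, ``the statements in this section are merely proposals that we expect to be true for any reasonable notion of (weak) $n$-category.'' Your sketch therefore goes beyond the paper rather than diverging from it. Its substance---realizing each $r$-homset of $[\catc,\catd]$ as a subset of an iterated dependent sum and product whose indices are built from homsets of $\catc$ and whose fibres are homsets of $\catd$, then reading off sizes from the closure properties of Proposition~\ref{prop:psetiter}---is exactly the machinery the paper develops for this purpose, and your case analysis is correctly bookkept: subsets of $k$-classes are $k$-classes, small-indexed products of classes are classes, sums of $k$-classes over $k$-classes and products of $k$-classes over $(k-1)$-class indices are $k$-classes, and the $n=\infty$ case is covered by the standing assumption $\nats\in\univv$. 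One refinement: for \emph{weak} functors and transformations, coherence is not only equational but includes extra data (compositors, unitors, naturality cells) indexed by composable tuples of cells of $\catc$; since composable tuples form subsets of finite products of homsets of $\catc$, this does not disturb the size analysis, but your structural hypothesis should be phrased broadly enough to include such data, not just cell-by-cell images subject to equations. Your closing caveat---that the shape of an $r$-cell cannot be pinned down uniformly across definitions---is precisely the paper's own reason for withholding a proof, so the gap you flag is not a defect of your argument relative to the paper; it is the gap the paper itself leaves open, and your sketch is a reasonable template for discharging it once a definition is fixed.
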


\begin{propprop} \label{prop:highcat} \hfill
  \begin{enumerate}
  \item The $(n+1)$-category $\ncat$ of small $n$-categories is light. 
  \item The $(n+1)$-category $\nlightcat_{k}$ of $k$-light $n$-categories is 2-moderate.
  \item The $(n+1)$-category $\nmodcat_{k}$ of $k$-moderate $n$-categories is $(k+1)$-moderate.
  \end{enumerate}
\end{propprop}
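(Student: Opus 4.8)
The plan is to reduce all three parts to two ingredients: an analysis of the higher homsets through functor $n$-categories, and a direct encoding of the objects. The key structural observation is that, writing $\catc,\catd$ for objects of the $(n+1)$-category in question (that is, $n$-categories of the relevant kind), its $0$-homset is the collection of all such $\catc$, whereas for $r \geqslant 0$ its $(r+1)$-homset over a pair $\catc,\catd$ is precisely the $r$-homset of the functor $n$-category $\funccat{\catc}{\catd}$. Thus the higher homsets are governed entirely by the preceding Proposed Theorem on functor $n$-categories, and only the $0$-homset needs separate treatment.

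For the higher homsets I would argue as follows. In part~1 both $\catc$ and $\catd$ are small, so $\funccat{\catc}{\catd}$ is small and all its homsets are small, as required for lightness. In part~3, $\catc$ and $\catd$ are $k$-moderate; since every $k$-class is a $(k+1)$-class (the left-hand column of the displayed inclusions is increasing), $\catd$ is in particular $(k+1)$-moderate, so the functor $n$-category result, applied with $\catc$ being $k$-moderate and $\catd$ being $(k+1)$-moderate, makes $\funccat{\catc}{\catd}$ be $(k+1)$-moderate, whence its homsets are $(k+1)$-classes. Part~2 follows the same pattern: a $k$-light $n$-category has every homset a class and so is moderate ($1$-moderate); hence both $\catc$ and $\catd$ are $2$-moderate, and the functor theorem with $\catc$ being $1$-moderate and $\catd$ being $2$-moderate yields $\funccat{\catc}{\catd}$ $2$-moderate. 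The conceptual point here---and the reason the conclusion sits one level above the hypothesis---is that feeding two categories at the \emph{same} moderateness level into a functor category forces a level shift, because the objects of $\funccat{\catc}{\catd}$ are families indexed by the cells of $\catc$, which are themselves classes of that level rather than one level down.

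For the $0$-homset I would encode each object $n$-category as a single entity. A $k$-moderate $n$-category has all homsets $k$-classes, and I claim it can be encoded as a single $k$-entity: one tuples together the whole tower of homsets, indexed by the lower cells. Each homset is a $k$-class, hence a $k$-entity, and the indexing sets are built from lower homsets by the sum $\sum_{b \in B}C_b$ and binary product $B \times C$ constructions of Proposition~\ref{prop:psetiter}, which keep them $k$-classes; so every tuple formed lies again in $\iterfamo$ applied to the set of $k$-classes, i.e.\ is a $k$-entity. Consequently the collection of all $k$-moderate $n$-categories is a set of $k$-entities, that is, a $(k+1)$-class, giving the $0$-homset for part~3. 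The same encoding shows that any moderate (in particular any $k$-light) $n$-category is a $1$-entity, so the $0$-homset in part~2 is a set of $1$-entities, a $2$-class; and a small $n$-category is plainly a small set, so the $0$-homset in part~1 is a class. In the case $n = \infty$ one uses $\nats \in \univv$ to form the $\nats$-indexed tuple over dimensions.

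I expect the encoding of the objects to be the main obstacle to make fully rigorous: one must fix a specific tuple-encoding of the homset tower of an $n$-category and verify, via the closure properties of $\iterfamo$, that it lands in the intended entity level uniformly in $r$ (and in the $n = \infty$ case). The functor-homset step, by contrast, is bookkeeping once the level shift ``$k$-class $\Rightarrow$ $(k+1)$-class'' is noted and the Proposed Theorem on functor $n$-categories is granted. Since no specific definition of weak $n$-category is fixed, both ingredients are carried out at the level of the expected homset structure, which is exactly why the statement is offered only as a Proposed Theorem.
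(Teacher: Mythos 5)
There is no proof in the paper to compare against: the statement is deliberately labelled a \emph{Proposed Theorem}, because the paper adopts no particular definition of weak $n$-category and says explicitly that the statements of this section ``are merely proposals that we expect to be true for any reasonable notion of (weak) $n$-category.'' Judged on its own, your sketch is the natural argument and fits all the cues the paper does give. Reducing the positive-dimensional homsets to the functor-$n$-category statement, via $\mathrm{hom}(\catc,\catd)=\funccat{\catc}{\catd}$, is exactly how the paper frames the result (it is presented as a generalization of Proposition~\ref{prop:funccat}), and your level-shift is the correct mechanism: since every $k$-class is a $(k+1)$-class, a $k$-moderate $\catd$ is also $(k+1)$-moderate, so the clause ``$\catc$ $(k-1)$-moderate, $\catd$ $k$-moderate'' applies one level up, which is precisely why the conclusion lands at level $k+1$. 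Your treatment of the $0$-homset likewise matches the paper's gloss that ``$(k+1)$-moderate'' means all cells are $k$-entities: the homsets of a $k$-moderate $n$-category are $k$-classes, which are exactly the admissible leaves and index sets in the closure defining $k$-entities; the derived index sets (sums and products of homsets) stay $k$-classes by Proposition~\ref{prop:psetiter}; and for $n=\infty$ the $\nats$-indexed tuple over dimensions is available because $\nats\in\univv$ is assumed. Hence each object is a $k$-entity and the collection of all of them is a $(k+1)$-class, with the analogous encodings giving a class in part~1 and a $2$-class in part~2. The caveats you flag are the right ones, and they are the paper's own reason for offering no proof: both of your ingredients presuppose that whatever definition of weak $n$-category is chosen makes the homset tower and the functor $n$-category behave as expected, so your argument has the same epistemic status as the statement itself---a proposal, sound at the level of homset structure, that becomes a theorem only once a definition is fixed.
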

The case $n=0$ of Proposed Theorem~\ref{prop:highcat} consists of familiar facts:
\begin{itemize}
\item $\set$ is light.
\item $\class$ is 2-moderate.
\item $\class_{k}$ is $(k+1)$-moderate.
\end{itemize}
The case $n=1$ answers our initial question:
\begin{itemize}
\item $\catu$ is light.
\item $\lightcatu$ is 2-moderate.
\item $\modcatku$ is $(k+1)$-moderate.
\end{itemize}
Another useful case, for finite $n$, is that $n\underline{\mathbf{CAT}} \eqdef n\underline{\mathbf{CAT}}_{n}$ is $(n+1)$-moderate. 

As for the notion of $k$-lightness, the following illustrates its significance.
\begin{propprop}  
  Let $\catc$ be an $n$-category.  Then the $(n+1)$-category $\spancat(\catc)$ is
  \begin{itemize}
  \item small if $\catc$ is small
  \item $(k+1)$-light if $\catc$ is $k$-light
  \item $k$-moderate if $\catc$ is $k$-moderate.
  \end{itemize}
\end{propprop}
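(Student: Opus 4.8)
The plan is to reduce everything to the size-closure properties of $k$-classes established in Proposition~\ref{prop:psetiter}, by first identifying how the homsets of $\spancat(\catc)$ are assembled from those of $\catc$. Since smallness, $k$-lightness and $k$-moderateness are all conditions on the individual $r$-homsets, and since $\spancat(\catc)$ additionally carries only the structure maps (composition by pullback, coherence data, and so on), which play no role in a size count, it suffices to exhibit each $r$-homset of $\spancat(\catc)$ as built from homsets of $\catc$ using $+$, $\times$ and $\sum$.

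First I would write down the homset formula. An object of $\spancat(\catc)$ is an object of $\catc$, so the $0$-homset is $\catc()$. A $1$-cell $a \to b$ is a span, i.e.\ an apex $x \in \catc()$ together with legs $f \in \catc(x,a)$ and $g \in \catc(x,b)$, so
\begin{displaymath}
  \spancat(\catc)(a,b) \;=\; \sum_{x \in \catc()} \catc(x,a) \times \catc(x,b).
\end{displaymath}
A $2$-cell between parallel spans $(x,f,g)$ and $(x',f',g')$ is an apex $1$-cell $h \in \catc(x,x')$ together with two filler $2$-cells, giving a sum indexed by the $1$-homset $\catc(x,x')$ of a product of two $2$-homsets of $\catc$. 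The evident pattern, which I would verify by induction on $r$ for whatever definition of weak $n$-category is in play, is that each $r$-homset of $\spancat(\catc)$ has the shape
\begin{displaymath}
  \sum_{p \in P} L_{1}(p) \times L_{2}(p),
\end{displaymath}
where the apex $p$ ranges over an $(r-1)$-homset $P$ of $\catc$ and the two legs $L_{1}(p), L_{2}(p)$ are $r$-homsets of $\catc$. At the top dimension $r = n+1$ the leg homsets are the trivial $(n+1)$-homsets of the $n$-category $\catc$ (singletons or empty), which are small.

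With this formula in hand, each clause becomes bookkeeping using Proposition~\ref{prop:psetiter} together with the fact (from the inclusion chain preceding the definition of $k$-class) that every $j$-class is a $k$-class for $j \leqslant k$. If $\catc$ is small, then $P$ and the legs are small, so each $r$-homset is small and $\spancat(\catc)$ is small. If $\catc$ is $k$-moderate, then $P$ and the legs are $k$-classes, so by closure of $k$-classes under $\times$ and $\sum$ each $r$-homset is a $k$-class and $\spancat(\catc)$ is $k$-moderate. For $k$-lightness the dimension shift is the crux: the $r$-homset of $\spancat(\catc)$ is small exactly when both its index $P$ (an $(r-1)$-homset of $\catc$) and its legs (the $r$-homsets) are small, i.e.\ when $r-1 \geqslant k$, that is $r \geqslant k+1$; for $r \leqslant k$ the index $P$ is at most a class and the legs are classes (promoting small legs to $1$-classes by monotonicity when $r=k$), so the sum of products is a class. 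Hence $\spancat(\catc)$ is a class up to dimension $k$ and small above it, which is precisely $(k+1)$-lightness.

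The main obstacle is the first step: pinning down the homset formula for a genuinely weak $n$-category, where a $2$-cell of spans carries coherence fillers and the top-dimensional cells must be treated as trivial. This is exactly where the proposal character of the statement bites, since it must hold for any reasonable definition of weak $n$-category. Once the displayed shape of the $r$-homsets is granted, the three size claims follow uniformly, and the only subtlety is the index/leg dimension gap of one, which is what upgrades $k$-lightness of $\catc$ to $(k+1)$-lightness of $\spancat(\catc)$ while leaving smallness and $k$-moderateness unshifted.
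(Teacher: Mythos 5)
There is no proof in the paper to compare yours against: the statement is a \emph{Proposed Theorem}, and the paper is explicit that, having adopted no particular definition of weak $n$-category, ``the statements in this section are merely proposals that we expect to be true for any reasonable notion of (weak) $n$-category.'' Within that constraint, your sketch is the natural argument and, as far as I can judge, a correct one: each $r$-homset of $\spancat(\catc)$ is a sum, indexed by an $(r-1)$-homset of $\catc$ (the cells between apexes), of binary products of (subsets of) $r$-homsets of $\catc$ (the leg fillers); the closure properties of $k$-classes under $+$, $\times$ and $\sum$ then do the bookkeeping. Two small corrections to how you invoke the paper: the statement you need is the unlabelled proposition about $k$-classes that follows Proposition~\ref{prop:psetiter} (and is derived from it), not Proposition~\ref{prop:psetiter} itself, which concerns $\Psi{A}$ for a set of sets $A$; and you should allow the legs $L_{1}(p),L_{2}(p)$ to be \emph{subsets} of $r$-homsets, since fillers are typically required to be invertible and at the top dimension degenerate to equations---this is harmless because small sets and $k$-classes are closed under passing to subsets. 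Your identification of the index/leg dimension gap as what shifts $k$-light to $(k+1)$-light while leaving smallness and $k$-moderateness unchanged is the essential content, and it is consistent with the paper's own example of the 2-light 2-category of small sets and small spans (the case $\catc=\set$, $k=1$). Finally, the obstacle you flag---that the homset formula cannot be verified without fixing a definition of weak $n$-category and of $\spancat$ on it---is not a defect of your argument but exactly the reason the paper presents this statement as a proposal rather than a proposition; your proof is as complete as the statement permits.
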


\section{Standard By Default} \label{sect:sbd}

\vspace{1ex}

As we have seen, in certain situations where two or more universes are commonly used, one suffices.  This simplifies categorical writing: we can work with a single universe parameter and leave it implicit, as we have done.  Only when we genuinely want more than one, or to choose an appropriate one using the Universe Axiom, would we mention universes explicitly.

Nonetheless, our terminology is still too verbose.  Consider the following passage:
\begin{quotation}
  A light category $\catc$ consists of a class $\ob \catc$ and family of small sets $(\catc(a,b))_{a,b \inob \catc}$ with composition and identities. An example is the light category of small groups, which has all small limits. Another is given by the well-ordered class of small ordinals.   Any small poset or small monoid gives a small category, and any light category $\catc$ gives a 2-moderate category $[\catc^{\op},\set]$.

 Light categories form a 2-moderate 2-category.  There is also the 2-light 2-category of small sets and small spans.  Finally we may consider the light $(\infty,1)$-category\footnote{An $(\infty,n)$-category is an $\infty$-category where, for all $k>n$, the $k$-cells are weakly invertible.  Several definitions have been proposed; see e.g.~\cite{BergnerRezk:comparison}.} of small $\infty$-groupoids.  It contains the fundamental $\infty$-groupoid of every small topological space.
\end{quotation}
This passage illustrates the convention we have used so far, which may be called \emph{Unrestricted By Default}.  Every set, category etc.\ mentioned is unrestricted, unless we specify some relationship with $\univv$.  This convention has served us well during our exploration of such relationships.  But it is unsuitable for ordinary writing, where size issues are not the main subject and should obtrude as little as possible.

To resolve this situation, we introduce the following terminology.  
\begin{definition} \label{def:standard}
A mathematical entity is described as \emph{$\univv$-standard}, or \emph{standard} for short, according to the following rules.
  \begin{itemize}
  \item A set, monoid, topological space, poset, family\footnote{In the sense of a pair $(I,(a_i)_{i \in I})$, where $I$ is a set.}, graph\footnote{A graph (more precisely called a \emph{quiver}) consists of a set
     $V$ of vertices, a set $E$ of edges, and
    source and target functions $s,t \colon E \to V$.}, diagram\footnote{In the sense of a pair $(\smallcati,D \colon \smallcati \to \catc)$, where $\smallcati$ is a graph.}, cardinal, ordinal etc.\ is standard when it is small.
  \item A category, groupoid, multicategory, locally ordered category etc.\ is standard when it is
    light.
  \item For $2 \leqslant n < \infty$, an $n$-category is
    standard when it is $n$-moderate.
  \item An $\infty$-groupoid is standard when it is small.
\item An $(\infty,1)$-category is standard when it is light.
  \item For $2 \leqslant n < \infty$, an $(\infty,n)$-category is standard when it is $n$-moderate.
  \item A function, relation, subset, functor, natural transformation etc.\ is always standard.
  \end{itemize}
\end{definition}
Definition~\ref{def:standard} is open-ended and based purely on convenience. It gives rise to a \emph{Standard By Default} convention: every entity is assumed to be standard, unless specified otherwise.   If we want to say that a set is not assumed to be small, we describe it as ``unrestricted'' or ``large''.  If we want to say that a category is not assumed to be light, we describe it as ``unrestricted'' or ``heavy''.

Here is a Standard By Default translation of the above passage:
\begin{quotation}
  A category $\catc$ consists of a class $\ob \catc$ and family of sets $(\catc(a,b))_{a,b \inob \catc}$ with composition and identities. An example is the category of groups, which has all limits. Another is given by the well-ordered class of ordinals.   Any poset or monoid gives a small category, and any category $\catc$ gives a 2-moderate category $[\catc^{\op},\set]$.

 Categories form a 2-category.  There is also the 2-light 2-category of sets and spans.  Finally we may consider the $(\infty,1)$-category of $\infty$-groupoids.  It contains the fundamental $\infty$-groupoid of every topological space.
\end{quotation}
Arguably this is close to current practice and not too onerous.  But the problem remains of interfacing with ordinary writing about groups, topological spaces, ordinals, $\infty$-groupoids etc.  Such writing has no universe parameter and therefore uses the Unrestricted By Default convention. The clash of conventions must be handled carefully, whether or not the Universe Axiom is assumed.

We finish by using Standard By Default to easily formulate an example from~\cite{Shulman:setcat}.  For a monoidal category $\catv$, a \emph{$\catv$-enriched category} $\catc$ consists of a class $\ob \catc$ and a family of $\catv$-objects $(\catc(a,b))_{a,b\inob\catc}$ with composition and identities.  We write $\bigmoncatu$ for the 2-category of monoidal categories, and $\vcat$ for that of $\catv$-enriched categories.
\begin{proposition} The construction 
$\catv \mapsto \vcat$ is a 2-functor from $\bigmoncatu$ to $\twocat$.  
\end{proposition}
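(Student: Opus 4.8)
The plan is to recognize this as the classical Eilenberg--Kelly change-of-base construction, package it as the action of $\catv \mapsto \vcat$ on objects, $1$-cells and $2$-cells of $\bigmoncatu$, and then verify that each image really is an object, $1$-cell, or $2$-cell of $\twocat$. The genuinely routine part is the change of base itself; the part specific to this paper is checking that every piece of data lands at the correct level of the class hierarchy. Concretely I would (i) show $\vcat$ is a $2$-moderate $2$-category, (ii) produce the induced $2$-functors and $2$-natural transformations, and (iii) confirm the (strict) $2$-functoriality axioms.

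For the object action, fix a standard (hence light) monoidal category $\catv$, so $\ob\catv$ is a class and its homsets are small. A $\catv$-category $\catc$ then has an object class $\ob\catc$ and hom-objects $\catc(a,b)$ lying in $\ob\catv$, hence small, with composition and identities given by families of $\catv$-morphisms indexed over powers of $\ob\catc$. Bundling these via the tuple encodings, each $\catv$-category is a $1$-entity, each $\catv$-functor is a $1$-entity, and each $\catv$-natural transformation is a $1$-class. Consequently the three $r$-homsets of the $2$-category $\vcat$ --- its objects, its $\catv$-functors between fixed objects, and its $\catv$-natural transformations between fixed functors --- are each collections of $1$-entities, i.e. elements of $\pset\iterfamo\pset\univv$, which are exactly the $2$-classes. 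This is the bookkeeping supplied by Proposition~\ref{prop:psetiter}, applied to the $\sum$ and $\prod$ that build the hom-family and composition data; it shows $\vcat$ is $2$-moderate, in line with the pattern of Proposed Theorem~\ref{prop:highcat}, so $\vcat$ is an object of $\twocat$.

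For the $1$- and $2$-cell actions, a monoidal functor $F \colon \catv \to \catw$ induces the change-of-base $2$-functor $F_* \colon \vcat \to \catw\mathbf{CAT}$: it is the identity on object classes, sends $\catc(a,b)$ to $F(\catc(a,b))$, and uses the (lax) constraints of $F$ to transport composition and units, acting on $\catv$-functors and $\catv$-natural transformations by applying $F$ componentwise. A monoidal natural transformation $\alpha \colon F \Rightarrow G$ induces $\alpha_* \colon F_* \Rightarrow G_*$, whose component at $\catc$ is the identity on objects with hom-maps $\alpha_{\catc(a,b)}$; monoidality of $\alpha$ is precisely what makes these $\catw$-functorial and $\alpha_*$ $2$-natural. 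Since $F_*$ only relabels small data indexed over the same classes, it is a genuine $2$-functor between $2$-moderate $2$-categories, hence a $1$-cell of $\twocat$, and $\alpha_*$ is a $2$-cell. Because change of base is defined by direct substitution, the equalities $(\id_\catv)_* = \id_{\vcat}$ and $(GF)_* = G_* F_*$, together with the analogous identities for vertical and horizontal composites of monoidal natural transformations and the whiskering laws, all hold strictly, giving the $2$-functoriality axioms.

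The main obstacle I anticipate is not the construction but the size bookkeeping of the second paragraph: one must check, by careful use of the closure of $k$-classes under $+$, $\times$, $\sum$ and $\prod$ (Proposition~\ref{prop:psetiter}), that the object collection and all hom-categories of $\vcat$ are $2$-classes \emph{uniformly} in $\catv$, so that $\catv \mapsto \vcat$ really lands in $\twocat$ without invoking a larger universe. It is also worth isolating where lightness of $\catv$ --- rather than mere local smallness or essential lightness --- is used, namely to force the hom-objects to be small and keep every encoding inside $\pset\iterfamo\pset\univv$.
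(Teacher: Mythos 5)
Your proposal is correct: the size bookkeeping showing that $\vcat$ is $2$-moderate (hence an object of $\twocat$), combined with the classical Eilenberg--Kelly change-of-base action on monoidal functors and monoidal natural transformations and its strict functoriality, is exactly what the statement requires under the Standard By Default reading. Note that the paper itself gives no proof---the proposition is presented at the end of Section~\ref{sect:sbd} as an illustration of how the convention lets one \emph{formulate} this example cleanly---so your argument supplies precisely the routine verification the paper leaves implicit, and it does so along the intended lines.
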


\section{Conclusion} \label{sect:conclusion}

Using families of representatives and $k$-classes, we have formulated several categorical concepts in a way that avoids the need for sophisticated encodings of quotients and tuples.  All our definitions and statements are given relative to at most one universe.  The Standard By Default convention makes this into a reasonably lightweight framework.

Our treatment is robust in the following sense.  ZFC assumes that everything is a set and $\in$-well-founded---the \emph{von Neumann assumptions}. Our formulations, unlike Scott's trick and Scott-McCarty pairing, do not rely on these assumptions.  So they are suitable for those who adopt a weaker set theory that, for example, may allow class-many urelements or Quine atoms\footnote{A Quine atom is a set $x$ that is equal to $\setbr{x}$.}.

The notion of \emph{2-class}, i.e.\ set of $1$-entities, has been especially useful.  We have made use of $\ncat_{2}$ and its $n=0$ case $\class_{2}$, but not of the fact that they are 3-moderate.  It would be interesting to know whether any 4-class, or the notions of 3-class or 2-entity, appear in a significant concept or theorem.

\paragraph*{Acknowledgements} I thank Ohad Kammar for helpful discussion. I also thank Eduardo Dubuc, Thomas Streicher and Richard Williamson for explaining a curious claim in~\cite[page 3]{GrothendieckVerdier:sga4start} that, for a small category $\catc$, the functor category $[\catc,\set]$ is neither moderate nor locally small. This arises from the practice of tagging every function with its domain and codomain, and likewise every functor and natural transformation. By not adopting that practice, the problem is avoided.

\end{document}